\newtheorem{theorem}{Theorem}[section]
\newtheorem{lemma}[theorem]{Lemma}
\newtheorem{prop}[theorem]{Proposition}
\newtheorem{cor}[theorem]{Corollary}
\theoremstyle{definition}
\newtheorem{example}[theorem]{Example}
\numberwithin{equation}{section}
\numberwithin{figure}{section}
\numberwithin{table}{section}
\newcommand{\norm}[1]{||#1||}
\def\NN{\mathbb{N}}
\def\RR{\mathbb{R}}
\newcommand{\ran}{{\rm Ran}}
\newcommand{\BBB}[4]{D_{#1}^{#3}D_{#2}^{#4}}
\renewcommand{\emph}[1]{{\bf #1}}
\newcommand{\m}{{\bf m}}
\newcommand{\n}{{\bf n}}
\newcommand{\PP}{{\bf P}}
\newcommand{\TT}{{\bf T}}
\newcommand{\QQ}{{\bf Q}}
\newcommand{\BB}{{\bf B}}
\newcommand{\II}{{\bf I}}
\newcommand{\PPhi}{{\bf \Phi}}
\newcommand{\Mu}{M}
\newcommand{\halpha}{\hat\alpha}
\newcommand{\hbeta}{\hat\beta}
\begin{document}


\title{Discrete Polynomial Blending}

\author{Scott N. Kersey}

\curraddr{Department of Mathematical Science, Georgia Southern University, Statesboro, GA 30460-8093}
\email{scott.kersey@gmail.com}
\thanks{}
\subjclass{41A05, 41A10, 41A15, 65D05 65D07}
\date{\today}
\dedicatory{ }
\keywords{polynomials, interpolation, approximation, blending, discrete blending, Boolean methods}

\begin{abstract}
In this paper we study ``discrete polynomial blending,''
a term used to define a certain discretized version of curve blending
whereby one approximates from the ``sum of tensor product polynomial spaces''
over certain grids.
Our strategy is to combine the theory of Boolean Sum methods with dual bases
connected to the Bernstein basis to construct a new quasi-interpolant for discrete blending.
Our blended element has geometric properties similar to that of the 
Bernstein-B\'ezier tensor product surface patch, 
and rates of approximation that are comparable with
those obtained in tensor product polynomial approximation.
\end{abstract}

\maketitle

\section{Introduction}

In this paper, we study the problem of discrete polynomial blending,
a discretization of the problem of curve blending.
In curve blending, one approximates a bivariate function by interpolating to
a network of curves extracted from the graph of the function.
Discrete blending involves a second level of discretization,
whereby the blended curves are interpolated at finite sets of points.
This is illustrated in Fig. \ref{fblend1}.
In the figure, the blended interpolant would interpolate to the 9 curves 
(4 horizontal and 5 vertical)
in the network, while the discrete blended surface interpolates at $67$ grid points.
In our work, the term ``interpolation'' can be taken loosely to mean interpolation
with respect to a given set of functionals, and may not necessarily imply point evaluation.

\begin{figure}[H]
\scalebox{.4}{
\input{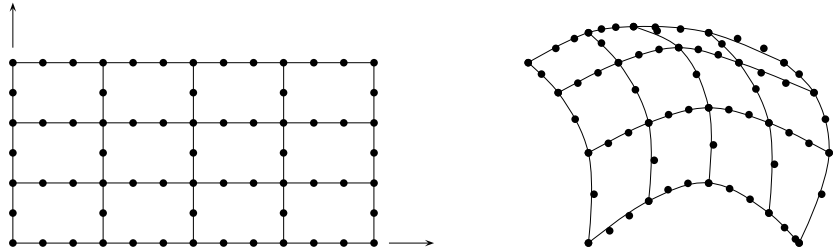}
}
\caption{Discrete Blended Grid and interpolant in the 
polynomial space $\Pi_{12} \otimes \Pi_{3} + \Pi_{5} \otimes \Pi_{7}$
of dimension $13 \times 4 + 5 \times 7 - 5 \times 4= 67$.}
\label{fblend1}
\end{figure}


The grid in Fig. \ref{fblend1} is not uniform due to gaps
between some grid points.
If we filled in these holes we would have a uniform grid with
$13 \times 7 = 91$  points that can be interpolated using tensor product polynomials.
Hence, we can interpolate at just $67$ points rather than $91$,
and in some cases do so with the same (or nearly the same) 
rate of approximation, as we shall show
in this paper.
In discrete polynomial blending, the approximating spaces are not 
generally tensor product polynomial spaces (although that is a special case).
As it turns out, our approximating spaces are the ``sum of tensor product polynomial spaces''.
Hence, our study is one of approximation from the sum of 
tensor product polynomial spaces.

What is known about discrete blending comes mainly from the literature 
on Boolean sum interpolation, sparse grid methods, lower set
interpolation and finite elements.
The topic was perhaps first studied by Biermann \cite{B03}
who constructed polynomial interpolants using the bivariate
Lagrange basis.
The book \cite{DS89} is an excellent summary of Boolean sum methods, 
including an analysis of Biermann interpolation.
In \cite{DP82}, a construction was given that generalized Biermann interpolation 
to interpolation with respect to more general sequences of functionals,
much like we will do in this paper.
Biermann interpolation was generalized to higher dimensions in \cite{D82},
under the title ``d-variate Boolean interpolation'',
and more recently to arbitrary ``lower sets'' in \cite{DF14} 
(which reduces to the results in \cite{D82} for total degree interpolation).

In this paper we construct a new discrete blended quasi-interpolant
based on the Bernstein basis.
To do so, we will bring in some techniques on ``dual basis in subspaces''
that the author has studied in \cite{K13,K14}.
From this we construct dual bases for the space of discrete blending,
and compute approximation estimates.
One of the main contributions is to show our quasi-interpolants achieve
rates of approximation comparable or the same as that of
tensor product interpolation on a larger grid, but with much fewer data points.
This leads us to the construction of a quasi-interpolant
analogous to the serendipity elements in the finite element method.
The results presented in this paper originate from a talk by the author
at the conference on curves and surfaces in Oslo, Norway, in 2012.

The remainder of the paper is organized as follows.
\begin{itemize}
\item The approximating space.
\item Quasi-uniform grids.
\item The Bernstein basis and univariate quasi-interpolant.
\item Discretely blended Bernstein-B\'ezier quasi-interpolants.
\item Order of Approximation.
\item Serendipity Elements.
\item Examples.
\end{itemize}

\section{The Approximating Space}

In discrete polynomial blending, the approximating space
is the algebraic sum of tensor product polynomial spaces.
Let $\m = [m_0, m_1, \ldots, m_r]$
and $\n = [n_0, n_1, \ldots, n_r]$ 
be sequences in $\NN_0^{r+1}$, the space of $(r+1)$-tuples of non-negative integers.
Let $\Pi_{m_k} \otimes \Pi_{n_{r-k}}$ be the tensor product of the spaces
$\Pi_{m_k}$ and $\Pi_{n_{r-k}}$ of polynomials
of degrees at most $m_k$ and $n_{r-k}$, respectively.
Then, we define our approximating spaces as
\begin{equation}
\label{e1}
S_{\m,\n} 
:= \sum_{k=0}^r 
\Pi_{m_k}\otimes \Pi_{n_{r-k}} = 
\Pi_{m_0}\otimes \Pi_{n_{r}} + \cdots + \Pi_{m_r}\otimes\Pi_{n_{0}}.
\end{equation}
We assume that both $\m$ and $\n$ are strictly increasing sequences,
an assumption that we justify by the following lemma.

\begin{lemma}
Let $S_{\m,\n}$ be defined as in (\ref{e1}) for some $\m$ and $\n$ in $\NN_0^{r+1}$.
There exists strictly increasing sequences $\hat\m$ and $\hat\n$ in $\NN_0^{\hat r+1}$
for some $\hat r \leq r$ such that $S_{\hat\m,\hat\n} = S_{\m,\n}$.
\end{lemma}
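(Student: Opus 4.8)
The plan is to strip the defining sum down to an irredundant family of tensor-product summands by deleting every summand that is already contained in another, and then to read off $\hat\m$ and $\hat\n$ directly from the surviving summands. The first ingredient I would record is the elementary containment criterion
\[
  \Pi_{a}\otimes\Pi_{b}\subseteq\Pi_{a'}\otimes\Pi_{b'}
  \quad\Longleftrightarrow\quad a\le a'\ \text{and}\ b\le b',
\]
valid for $a,b,a',b'\in\NN_0$. The implication $\Leftarrow$ is immediate, and $\Rightarrow$ follows by testing the monomials $x^{a}$ and $y^{b}$: both lie in the left-hand space, hence in $\Pi_{a'}\otimes\Pi_{b'}$, which forces $a\le a'$ and $b\le b'$. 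It follows that $S_{\m,\n}$ depends only on the finite set of exponent pairs $P:=\{(m_k,n_{r-k}):0\le k\le r\}\subseteq\NN_0^{2}$, through $S_{\m,\n}=\sum_{(a,b)\in P}\Pi_{a}\otimes\Pi_{b}$, and that the order in which the summands are listed is irrelevant.

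Next I would pass to the subset $Q\subseteq P$ of elements that are maximal under the componentwise partial order on $\NN_0^{2}$. Since $P$ is finite, every pair in $P$ is dominated by some pair in $Q$, so by the containment criterion each summand indexed by $P$ sits inside a summand indexed by $Q$; hence $S_{\m,\n}=\sum_{(a,b)\in Q}\Pi_{a}\otimes\Pi_{b}$. Because $Q$ is an antichain, no two of its pairs can share a coordinate, so we may list $Q=\{(a_0,b_0),\dots,(a_s,b_s)\}$ with $a_0<a_1<\cdots<a_s$, and then $b_0>b_1>\cdots>b_s$ is forced (if $i<j$ and $b_i\le b_j$ we would have $(a_i,b_i)\le(a_j,b_j)$, contradicting the antichain property). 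Setting $\hat r:=s$, $\hat\m:=[a_0,\dots,a_s]$, and $\hat\n:=[b_s,b_{s-1},\dots,b_0]$ produces strictly increasing sequences in $\NN_0^{\hat r+1}$ with $\hat n_{\hat r-k}=b_k$, so that $S_{\hat\m,\hat\n}=\sum_{k=0}^{s}\Pi_{a_k}\otimes\Pi_{b_k}=\sum_{(a,b)\in Q}\Pi_{a}\otimes\Pi_{b}=S_{\m,\n}$. Finally $\hat r+1=|Q|\le|P|\le r+1$, giving $\hat r\le r$; in the degenerate case $s=0$ the two sequences have a single entry and are vacuously strictly increasing.

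The argument is short, and the only delicate points are the monomial computation behind the containment criterion and the bookkeeping needed to guarantee that the reduced sequences come out \emph{strictly} increasing rather than merely nondecreasing — which is exactly why one should pass to the maximal elements of $P$ rather than simply sort the original family. An equivalent route is to work with the lower set $\bigcup_{k}\bigl([0,m_k]\times[0,n_{r-k}]\bigr)\cap\ZZ^{2}$ of admissible monomial exponents and its outer corners, but the antichain formulation seems cleanest and is the one I would write up.
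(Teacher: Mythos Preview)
Your proof is correct and follows essentially the same approach as the paper: both arguments discard the summands $\Pi_{m_k}\otimes\Pi_{n_{r-k}}$ that are contained in other summands and then sort what remains so that the first-coordinate sequence is strictly increasing, which forces the second-coordinate sequence to be strictly increasing as well. Your presentation via the antichain of maximal pairs in $P$ is somewhat cleaner than the paper's iterative trimming procedure, but the underlying idea is identical.
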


\begin{proof}
To begin, let $X := S_{\m,\n}$ and let $\hat\m = \m$ and $\hat\n = \n$.
To prove this result, we will rearrange and truncate 
$\hat\m$ and $\hat\n$ until they are strictly increasing. 
Suppose $\hat m_i = \hat m_j$ for some $i$ and $j$.
Then either $\Pi_{\hat m_i} \otimes \Pi_{\hat n_{r-i}} \subset \Pi_{\hat m_{j}} \otimes \Pi_{\hat n_{r-j}}$
or $\Pi_{\hat m_{j}} \otimes \Pi_{\hat n_{r-j}} \subset \Pi_{\hat m_{i}} \otimes \Pi_{\hat\hat  n_{r-i}}$.
In the former case, $\Pi_{\hat m_i} \otimes \Pi_{\hat n_{r-i}}$ can be removed
from the representation in (\ref{e1}) without changing $X$,
and so we remove $\hat m_i$ and $\hat n_{r-i}$ from the sequences 
$\hat \m$ and $\hat \n$.
In the latter case, we remove $\hat m_j$ and $\hat n_{r-j}$.
After removing these unnecessary terms, the terms left in the
revised sequence $\hat \m$ are distinct.
That is, $\hat m_i \neq \hat m_j$ for all $i$ and $j$.
Further, since $S_{\hat\m,\hat\n}$ is not affected by the ordering of the tensor product terms,
we can rearrange the pairs $(\hat m_i,\hat n_{r-i})$ in $\hat \m\times\hat \n$ so that
$\hat\m$ is strictly increasing.
Thus, assume that $\hat\m$ is strictly increasing.
Now, if $\hat \n$ is not strictly increasing, then there exists an index $i$ such that
$\hat m_i < \hat m_{i+1}$ and $\hat n_{r-i} \leq \hat n_{r-(i+1)}$,
in which case
$\Pi_{\hat m_i} \otimes \Pi_{\hat n_{r-i}} \subset \Pi_{\hat m_{i+1}} \otimes \Pi_{\hat n_{r-(i+1)}}$,
and so
$\Pi_{\hat m_i} \otimes \Pi_{\hat n_{r-i}}$ can be removed from the sum without changing $X$.
After trimming away all such terms in the sequence, we are left with $\hat \n$ strictly increasing.
Hence, we are left with strictly increasing sequences $\hat \m$ and $\hat \n$ 
in $\NN_0^{\hat r}$ for some $\hat r$
such that $S_{\hat\m,\hat\n} = X$.
Further, since we are not adding new terms, $\hat r \leq r$.
\end{proof}

A polynomial $p\in S_{\m,\n}$ can be represented $ p = p_1 + \cdots + p_r $ with
$p_k \in \Pi_{m_k} \otimes \Pi_{n_{r-k}}$.
Each $p_k$ can be expressed
$p_k = \sum_{i=0}^{m_k} \sum_{j=0}^{n_{r-k}} \alpha_{ij}^k x^i y^j$
for some coefficients $\alpha_{ij}^k$.
Therefore, the power basis for the space $S_{\m,\n}$ is the union
$$
\bigcup_{k=0}^r \{ x^i y^j : 0 \leq i \leq m_k, \ 0 \leq j \leq n_{r-k} \}.
$$
However, this union is not disjoint.
This basis can visualized by the dots in a \emph{lower grid},
which is defined to be the graph of the \emph{lower set}
$$
L_{\m,\n} := \bigcup_{k=0}^r [0, \ldots, m_k] \times [0, \ldots, n_{r-k}].
$$
The power basis for $S_{\m,\n}$ is therefore
$ \{ x^i y^j : (i,j) \in L_{\m,\n} \}$,
and the dimension of $S_{\m,\n}$  is the number of dots in the lower grid.
By counting the distinct dots in the lower grid, we arrive at the following:

\begin{prop}
\label{prop0}
The space $S_{\m,\n}$ is a vector space of dimension
\begin{align*}
\dim(S_{\m,\n}) &= \sum_{k=0}^r (m_k-m_{k-1}) (n_{r-k} +1),
\end{align*}
with $m_{-1} := -1$.
\end{prop}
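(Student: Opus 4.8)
The plan is to count the distinct lattice points in the lower grid: the excerpt has already established that $\{x^iy^j : (i,j)\in L_{\m,\n}\}$ is a basis of $S_{\m,\n}$, so $\dim(S_{\m,\n})$ equals the number $\#L_{\m,\n}$ of distinct dots in the graph of $L_{\m,\n}$. The structural fact I would lean on is the standing hypothesis (justified by the preceding lemma) that $\m$ and $\n$ are strictly increasing, i.e. $m_0<m_1<\cdots<m_r$ while $n_r>n_{r-1}>\cdots>n_0$. Thus $L_{\m,\n}=\bigcup_{k=0}^r R_k$ with $R_k:=[0,\ldots,m_k]\times[0,\ldots,n_{r-k}]$, a union of rectangles whose horizontal extents increase with $k$ and whose vertical extents decrease with $k$: a staircase region.

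First I would slice the grid into vertical columns indexed by the $x$-coordinate and partition the set of columns $\{0,1,\ldots,m_r\}$ into the consecutive blocks $\{m_{k-1}+1,\ldots,m_k\}$ for $k=0,1,\ldots,r$, where $m_{-1}:=-1$ so that the $k=0$ block is $\{0,\ldots,m_0\}$; these blocks are disjoint and the $k$-th block has $m_k-m_{k-1}$ columns. Next, for a column $x$ in the $k$-th block I would identify which rectangles meet it: since $\m$ is strictly increasing, $x\le m_j$ holds exactly when $j\ge k$, so column $x$ meets precisely $R_k,R_{k+1},\ldots,R_r$. The set of $y$-values occurring in that column is therefore $\bigcup_{j\ge k}\{0,\ldots,n_{r-j}\}$, and because $j\mapsto n_{r-j}$ is decreasing this union is simply the interval $\{0,\ldots,n_{r-k}\}$, of size $n_{r-k}+1$. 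Summing the column counts over all blocks yields $\dim(S_{\m,\n})=\sum_{k=0}^r (m_k-m_{k-1})(n_{r-k}+1)$, as claimed; as a consistency check, the horizontal counts sum to $\sum_{k=0}^r(m_k-m_{k-1})=m_r+1$, the correct total number of columns.

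The step that needs the most care — and the only place any real content enters — is the collapse of $\bigcup_{j\ge k}\{0,\ldots,n_{r-j}\}$ to $\{0,\ldots,n_{r-k}\}$, which is exactly where the strict monotonicity of $\n$ (equivalently, of $k\mapsto n_{r-k}$ being decreasing) is used; without it the staircase picture degenerates and the stated closed form need not hold, since the formula is not invariant under reordering or padding of $\m,\n$. One should therefore note up front that the previous lemma permits assuming $\m$ and $\n$ strictly increasing, and that $\dim(S_{\m,\n})$ depends only on the space, not on the chosen representation. Everything beyond that is a routine column-by-column tally.
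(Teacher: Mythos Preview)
Your proposal is correct and follows exactly the approach the paper indicates: the paper does not give a formal proof but simply says ``by counting the distinct dots in the lower grid, we arrive at the following,'' and your column-by-column tally is precisely that count made rigorous. The only content beyond the paper's sketch is your explicit use of the strict monotonicity of $\m$ and $\n$ to collapse the union of $y$-ranges in each column, which is the natural way to justify the staircase picture.
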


In Tbl. \ref{f1a}, the dimension of the approximating space is given for
a few choices of $\m$ and $\n$.
In Fig. \ref{f1b}, the corresponding lower grids are plotted.

\begin{table}[H]
\begin{tabular}{|l|l|l|l|}
\hline
$\m$ & $\n$ & $S_{\m,\n}$ & $\dim(S_{\m,\n})$ \\
\hline
$[2]$ & $[2]$ & $\Pi_2 \otimes \Pi_2$ & $(2+1)(2+1) = 9$ \\
$[1,2]$ & $[1,2]$ & $\Pi_1 \otimes \Pi_2 + \Pi_2 \otimes \Pi_1$ & 
$(1+1)(2+1) + (2-1)(1+1) = 8$ \\
$[1,3,6]$ & $[1,3,4]$ &
$\Pi_1 \otimes \Pi_4 + \Pi_3 \otimes \Pi_3 + \Pi_6 \otimes \Pi_1$ &
$(1+1)(4+1)+(3-1)(3+1)+(6-3)(1+1)  = 24$ \\
\hline
\end{tabular}
\caption{
Approximating spaces for: $\m=\n=[2]$, $\m=\n=[1,2]$, $\m= [1,3,6]$ and $\n = [1,3,4]$.
}
\label{f1a}
\end{table}

\begin{figure}[H]
\scalebox{0.5}{
\begin{pspicture}(7,5)
\psdots[dotscale=1.5](0,0)(0,1)(0,2)(1,0)(1,1)(1,2)(2,0)(2,1)(2,2)
\end{pspicture}}
\scalebox{0.5}{
\begin{pspicture}(7,5)
\psdots[dotscale=1.5](0,0)(0,1)(0,2)(1,0)(1,1)(1,2)(2,0)(2,1)
\end{pspicture}}
\scalebox{0.5}{
\begin{pspicture}(7,5)
\psdots[dotscale=1.5](0,0)(0,1)(0,2)(0,3)(0,4)(1,0)(1,1)(1,2)(1,3)(1,4)(2,0)(2,1)(2,2)(2,3)
(3,0)(3,1)(3,2)(3,3)(4,0)(4,1)(5,0)(5,1)(6,0)(6,1)
\end{pspicture}}
\caption{ Lower grids for $\m=\n=[2]$, $\m=\n=[1,2]$, $\m= [1,3,6]$ and $\n = [1,3,4]$. }
\label{f1b}
\end{figure}

\section{Quasi-Uniform Grids}

Our discretely blended surfaces are defined over certain quasi-uniform grids, defined as follows.
As above, $\m = \{m_0, \ldots, m_r\}$ and $\n = \{n_0, \ldots, n_r\}$ 
are strictly increasing sequences in $\NN_0^r$.
We assume moreover that 
$m_k$ divides $m_{k+1}$ and $n_{k}$ divides $n_{k+1}$ for $k=0, \ldots, r-1$.
For $k=0, \ldots, r$, let
$$\alpha^{k} = [i m_r/m_k: i = 0, \ldots, m_k]$$
and
$$\beta^{k} = [i n_r/n_k: i = 0, \ldots, n_k].$$
Note that $\alpha^{k}$ is a sequence of $m_k+1$ uniformly spaced points
from $0$ to $m_r$,
and
$\beta^{k}$ is a sequence of $n_k+1$ uniformly spaces points
from $0$ to $n_r$.
Since $m_k | m_r$ and $n_k | n_r$, these are integer sequences.
Moreover, because $m_k | m_{k+1}$ and $n_k | n_{k+1}$,
it follows that $\alpha^{k} \subset \alpha^{k+1}$
and $\beta^{k} \subset \beta^{k+1}$.
Then, we define our \emph{quasi-uniform set} as 
$$
G_{\m,\n} 
:= \bigcup_{k=0}^r \Big\{(\alpha_i^{k}, \beta_j^{{r-k}}) 
: i = 0, \ldots, m_k, \ j=0, \ldots, n_{r-k} \Big\}.
$$
We call the graph of the quasi-uniform set $G_{\m,\n}$ a \emph{quasi-uniform grid}.
By the assumptions on $\m$ and $\n$, the the number of dots in 
our quasi-uniform grids match the dimension of the spaces $S_{\m,\n}$.
In fact, the graph of $G_{\m,\n}$ is a permutation of the graph of $L_{\m,\n}$.
To construct the permutation, let
$$
\alpha = [\alpha^0, \alpha^1-\alpha^0, \ldots, \alpha^r-\alpha^{r-1}]
$$
and
$$
\beta = [\beta^0, \beta^1-\beta^0, \ldots, \beta^r-\beta^{r-1}],
$$
with $\alpha^{k+1}-\alpha^k$ and $\beta^{k+1}-\beta^k$ defined as the set difference.
Then, $L_{\m,\n} = G_{\m,\n}(\alpha,\beta)$.

An example is provided in Fig. \ref{f5},
where lower and quasi-uniform grids are plotted for the space
$$
S_{[3,6,12,24],[2,4,8,16]} = \Pi_3 \times \Pi_{16} + 
  \Pi_6 \times \Pi_8 + \Pi_{12} \times \Pi_4 + \Pi_{24} \times \Pi_2.
$$
Here, 
\begin{align*}
\alpha^0 &= [0,8,16,24] \\
\alpha^1 &= [0,4,8,12,16,20,24] \\
\alpha^2 &= [0,2,4,6,8,10,12,14,16,18,20,22,24] \\
\alpha^3 &= [0,1,2,3,4,5,6,7,8,9,10,11,12,13,14,15,16,17,18,19,20,21,22,23,24] \\
\beta^0 &= [0,8,16] \\
\beta^1 &= [0,4,8,12,16] \\
\beta^2 &= [0,2,4,6,8,10,12,14,16] \\
\beta^3 &= [0,1,2,3,4,5,6,7,8,9,10,11,12,13,14,15,16].
\end{align*}
Therefore,
\begin{align*}
\alpha
&= 
[\underbrace{0,8,16,24}_{\alpha^0},
 \underbrace{4,12,20}_{\alpha^1-\alpha_0},
 \underbrace{2,6,10,14,18,22}_{\alpha^2-\alpha_1},
 \underbrace{1,3,5,7,9,11,13,15,17,19,21,23}_{\alpha^3-\alpha^2}] \\
\beta
&= [\underbrace{0,8,16}_{\beta^0},
    \underbrace{4,12}_{\beta^1-\beta^0},
    \underbrace{2,6,10,14}_{\beta^2-\beta^1},
    \underbrace{1,3,5,7,9,11,13,15}_{\beta^3-\beta^2}].
\end{align*}
The dimension of $S_{\m,\n}$ is $161$, which matches the number of grid points.

\begin{figure}[H]
\centering
\scalebox{0.3}{
\begin{pspicture}(24,16)
\psdots[dotscale=2]
(3,16)(6,8)(12,4)(24,2)
(0,0)(0,1)(0,2)(0,3)(0,4)(0,5)(0,6)(0,7)(0,8)(0,9)(0,10)(0,11)(0,12)(0,13)(0,14)(0,15)(0,16)(1,0)(1,1)(1,2)(1,3)(1,4)(1,5)(1,6)(1,7)(1,8)(1,9)(1,10)(1,11)(1,12)(1,13)(1,14)(1,15)(1,16)(2,0)(2,1)(2,2)(2,3)(2,4)(2,5)(2,6)(2,7)(2,8)(2,9)(2,10)(2,11)(2,12)(2,13)(2,14)(2,15)(2,16)(3,0)(3,1)(3,2)(3,3)(3,4)(3,5)(3,6)(3,7)(3,8)(3,9)(3,10)(3,11)(3,12)(3,13)(3,14)(3,15)(0,0)(0,1)(0,2)(0,3)(0,4)(0,5)(0,6)(0,7)(0,8)(1,0)(1,1)(1,2)(1,3)(1,4)(1,5)(1,6)(1,7)(1,8)(2,0)(2,1)(2,2)(2,3)(2,4)(2,5)(2,6)(2,7)(2,8)(3,0)(3,1)(3,2)(3,3)(3,4)(3,5)(3,6)(3,7)(3,8)(4,0)(4,1)(4,2)(4,3)(4,4)(4,5)(4,6)(4,7)(4,8)(5,0)(5,1)(5,2)(5,3)(5,4)(5,5)(5,6)(5,7)(5,8)(6,0)(6,1)(6,2)(6,3)(6,4)(6,5)(6,6)(6,7)(0,0)(0,1)(0,2)(0,3)(0,4)(1,0)(1,1)(1,2)(1,3)(1,4)(2,0)(2,1)(2,2)(2,3)(2,4)(3,0)(3,1)(3,2)(3,3)(3,4)(4,0)(4,1)(4,2)(4,3)(4,4)(5,0)(5,1)(5,2)(5,3)(5,4)(6,0)(6,1)(6,2)(6,3)(6,4)(7,0)(7,1)(7,2)(7,3)(7,4)(8,0)(8,1)(8,2)(8,3)(8,4)(9,0)(9,1)(9,2)(9,3)(9,4)(10,0)(10,1)(10,2)(10,3)(10,4)(11,0)(11,1)(11,2)(11,3)(11,4)(12,0)(12,1)(12,2)(12,3)(0,0)(0,1)(0,2)(1,0)(1,1)(1,2)(2,0)(2,1)(2,2)(3,0)(3,1)(3,2)(4,0)(4,1)(4,2)(5,0)(5,1)(5,2)(6,0)(6,1)(6,2)(7,0)(7,1)(7,2)(8,0)(8,1)(8,2)(9,0)(9,1)(9,2)(10,0)(10,1)(10,2)(11,0)(11,1)(11,2)(12,0)(12,1)(12,2)(13,0)(13,1)(13,2)(14,0)(14,1)(14,2)(15,0)(15,1)(15,2)(16,0)(16,1)(16,2)(17,0)(17,1)(17,2)(18,0)(18,1)(18,2)(19,0)(19,1)(19,2)(20,0)(20,1)(20,2)(21,0)(21,1)(21,2)(22,0)(22,1)(22,2)(23,0)(23,1)(23,2)(24,0)(24,1)
\end{pspicture}}
\qquad
\scalebox{0.3}{
\begin{pspicture}(24,16)
\psdots[dotscale=2]
(24,16)
(0,0)(0,1)(0,2)(0,3)(0,4)(0,5)(0,6)(0,7)(0,8)(0,9)(0,10)(0,11)(0,12)(0,13)(0,14)(0,15)(0,16)(8,0)(8,1)(8,2)(8,3)(8,4)(8,5)(8,6)(8,7)(8,8)(8,9)(8,10)(8,11)(8,12)(8,13)(8,14)(8,15)(8,16)(16,0)(16,1)(16,2)(16,3)(16,4)(16,5)(16,6)(16,7)(16,8)(16,9)(16,10)(16,11)(16,12)(16,13)(16,14)(16,15)(16,16)(24,0)(24,1)(24,3)(24,4)(24,5)(24,6)(24,7)(24,8)(24,9)(24,10)(24,11)(24,12)(24,13)(24,14)(24,15)(0,0)(0,2)(0,4)(0,6)(0,8)(0,10)(0,12)(0,14)(0,16)(4,0)(4,2)(4,4)(4,6)(4,8)(4,10)(4,12)(4,14)(4,16)(8,0)(8,2)(8,4)(8,6)(8,8)(8,10)(8,12)(8,14)(8,16)(12,0)(12,2)(12,4)(12,6)(12,8)(12,10)(12,12)(12,14)(16,0)(16,2)(16,4)(16,6)(16,8)(16,10)(16,12)(16,14)(16,16)(20,0)(20,2)(20,4)(20,6)(20,8)(20,10)(20,12)(20,14)(20,16)(24,0)(24,4)(24,6)(24,8)(24,10)(24,12)(24,14)(0,0)(0,4)(0,8)(0,12)(0,16)(2,0)(2,4)(2,8)(2,12)(2,16)(4,0)(4,4)(4,8)(4,12)(4,16)(6,0)(6,4)(6,12)(6,16)(8,0)(8,4)(8,8)(8,12)(8,16)(10,0)(10,4)(10,8)(10,12)(10,16)(12,0)(12,4)(12,8)(12,12)(12,16)(14,0)(14,4)(14,8)(14,12)(14,16)(16,0)(16,4)(16,8)(16,12)(16,16)(18,0)(18,4)(18,8)(18,12)(18,16)(20,0)(20,4)(20,8)(20,12)(20,16)(22,0)(22,4)(22,8)(22,12)(22,16)(24,0)(24,4)(24,8)(24,12)(0,0)(0,8)(0,16)(1,0)(1,8)(1,16)(2,0)(2,8)(2,16)(3,0)(3,8)(3,16)(4,0)(4,8)(4,16)(5,0)(5,8)(5,16)(6,0)(6,16)(7,0)(7,8)(7,16)(8,0)(8,8)(8,16)(9,0)(9,8)(9,16)(10,0)(10,8)(10,16)(11,0)(11,8)(11,16)(12,0)(12,8)(12,16)(13,0)(13,8)(13,16)(14,0)(14,8)(14,16)(15,0)(15,8)(15,16)(16,0)(16,8)(16,16)(17,0)(17,8)(17,16)(18,0)(18,8)(18,16)(19,0)(19,8)(19,16)(20,0)(20,8)(20,16)(21,0)(21,8)(21,16)(22,0)(22,8)(22,16)(23,0)(23,8)(23,16)(24,0)(24,8)(24,2)(6,8) \end{pspicture}}
\caption{Lower grid $L_{\m,\n}$ and quasi-uniform grid $G_{\m,\n}$ 
for $\m = [3,6,12,24]$ and $\n = [2,4,8,16]$.}
\label{f5}
\end{figure}

\section{The Bernstein Basis and Quasi-Interpolation Projectors}

In this section we construct univariate projectors based on quasi-interpolation.
Our construction begins with the Bernstein basis for univariate polynomials.
We assume as before that
$\m$ and $\n$ are strictly increasing sequences in $\NN_0^r$ 
such that $m_k | m_{k+1}$ and $n_k | n_{k+1}$ for $k=0, \ldots, r-1$.  
Let $B^{m_k}= [B_0^{m_k}, \ldots, B_m^{m_k}]$ 
be the Bernstein basis for $\Pi_{{m_k}}$,
scaled to the interval $[a,b]$,
and let 
$B^{n_k}= [B_0^{n_k}, \ldots, B_n^{n_k}]$ be the Bernstein basis for $\Pi_{{n_k}}$ 
scaled to the interval $[c,d]$.
Hence,
\begin{align*}
B_i^{m_k}&=\binom{m_k}{i} \Big(\frac{b-\cdot}{b-a}\Big)^{m_k-i}\Big(\frac{\cdot-a}{b-a}\Big)^i
\quad \text{and} \quad
B_j^{n_k}=\binom{n_k}{j} \Big(\frac{d-\cdot}{d-c}\Big)^{n_k-j}\Big(\frac{\cdot-c}{d-c}\Big)^j.
\end{align*}
We view these bases as row vectors.
Hence, for coefficient sequences
$\alpha \in \RR^{m_k+1}$,
viewed as column vectors,
we have the compact representation
$$
p = B^{m_k} \alpha = \sum_{i=0}^{m_k} \alpha_i B_i^{m_k}
$$
for polynomials $p \in\Pi_{m_k}$.

Next, we construct functionals dual to $B^{m_k}$ and $B^{n_k}$ defined over
continuous functions.
To do so, we follow the dual functionals constructed in Section 2.16 in \cite{SL2007} for the
multivariate Bernstein basis.
For this, we let
$\Delta_{m_k} = [\delta_{x_0^{m_k}}, \ldots, \delta_{x_{m_k}^{m_k}}]$
be the map 
$$ \Delta_{m_k} : f \mapsto [f(x_0^{m_k}), \ldots, f(x_{m_k}^{m_k})] $$
at points $x^{m_k}_i = a+\frac{i}{{m_k}} (b-a)$.
Likewise, we define $\Delta_{n_k}$ at $y^{n_k}_i = c+\frac{i}{{n_k}} (d-c)$.
Let $T_{m_k}$ be the $({m_k}+1)\times({m_k}+1)$ matrix
$$T_{m_k} := \Delta_{{m_k}}^TB^{m_k} = [\delta_{x_i^{m_k}} B_j^{m_k}: 0 \leq i,j \leq {m_k} ],$$ 
and we define $\Lambda_{m_k} := \Delta_{m_k} T_{m_k}^{-T}$.
Thus, $\Lambda_{m_k}  = [\lambda_0^{m_k}, \ldots, \lambda_{m_k}^{m_k}]$ is a vector-map of
${m_k}+1$ functionals defined on continuous functions.
Likewise, we construct $\Lambda_{n_k}$ similarly.
Duality is verified next.

\begin{lemma}
$\Lambda_{m_k}$ is dual to $B^{m_k}$ and $\Lambda_{n_k}$ is dual to $B^{n_k}$.
\end{lemma}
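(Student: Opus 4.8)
The plan is to unwind the matrix notation: recognize $T_{m_k}$ as the collocation matrix of the Bernstein basis at the nodes $x_i^{m_k}$, observe that it is invertible, and then read off the duality relation $\lambda_i^{m_k}(B_j^{m_k})=\delta_{ij}$ as a single matrix identity. First I would spell out entries. Since $\Delta_{m_k}=[\delta_{x_0^{m_k}},\dots,\delta_{x_{m_k}^{m_k}}]$ and $B^{m_k}=[B_0^{m_k},\dots,B_{m_k}^{m_k}]$ are both row vectors, the product $T_{m_k}=\Delta_{m_k}^T B^{m_k}$ has $(i,j)$ entry $\delta_{x_i^{m_k}}B_j^{m_k}=B_j^{m_k}(x_i^{m_k})$; that is, $T_{m_k}$ evaluates the $m_k+1$ Bernstein polynomials at the $m_k+1$ equally spaced nodes $x_0^{m_k},\dots,x_{m_k}^{m_k}$ in $[a,b]$.

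Next I would justify that $T_{m_k}$ is nonsingular, so that $T_{m_k}^{-1}$ and hence $T_{m_k}^{-T}$ exist. The polynomials $B_0^{m_k},\dots,B_{m_k}^{m_k}$ form a basis of $\Pi_{m_k}$, and $\Pi_{m_k}$ is a Haar space of dimension $m_k+1$: a polynomial of degree at most $m_k$ vanishing at $m_k+1$ distinct points is identically zero. Since the nodes $x_i^{m_k}$ are distinct, the only coefficient vector $c$ with $T_{m_k}c=0$ (equivalently, $B^{m_k}c$ vanishing at all $x_i^{m_k}$) is $c=0$, so $\det T_{m_k}\neq 0$.

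With invertibility in hand, the verification is one line. By definition $\Lambda_{m_k}=\Delta_{m_k}T_{m_k}^{-T}$, so, using $(T_{m_k}^{-T})^T=T_{m_k}^{-1}$ and the defining identity $\Delta_{m_k}^T B^{m_k}=T_{m_k}$,
\[
\Lambda_{m_k}^T B^{m_k}=\bigl(T_{m_k}^{-T}\bigr)^T\,\Delta_{m_k}^T B^{m_k}=T_{m_k}^{-1}T_{m_k}=I.
\]
The $(i,j)$ entry of the left-hand side is precisely $\lambda_i^{m_k}(B_j^{m_k})$, so $\lambda_i^{m_k}(B_j^{m_k})=\delta_{ij}$, which is the asserted duality of $\Lambda_{m_k}$ with $B^{m_k}$. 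The statement for $\Lambda_{n_k}$ and $B^{n_k}$ follows verbatim, replacing $m_k$ by $n_k$, the interval $[a,b]$ by $[c,d]$, and the nodes $x_i^{m_k}$ by $y_i^{n_k}$. The only point requiring any care is the transpose bookkeeping in the displayed identity — keeping straight that "dual" means the mixed evaluation matrix $[\lambda_i^{m_k}(B_j^{m_k})]$ equals $I$; the nonsingularity of $T_{m_k}$ is entirely standard.
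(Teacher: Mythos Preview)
Your proof is correct and follows essentially the same route as the paper: the key identity $\Lambda_{m_k}^T B^{m_k} = T_{m_k}^{-1}\Delta_{m_k}^T B^{m_k} = T_{m_k}^{-1}T_{m_k} = I$ is exactly the paper's computation. You additionally justify the invertibility of $T_{m_k}$ via the Haar property, which the paper leaves implicit, but otherwise the arguments coincide.
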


\begin{proof}
Duality of $\Lambda_{m_k}$ follows by
$$\Lambda_{m_k}^TB^{m_k}
 = \big(\Delta_{m_k} T_{m_k}^{-T}\big)^T B^{m_k} 
 = T_{m_k}^{-1} \Delta_{m_k}^T  B^{m_k} 
 = T_{m_k}^{-1} T_{m_k} = I,
$$
where $I$ is the $({m_k}+1)\times({m_k}+1)$ identity matrix.
Hence, $\lambda_i^{{m_k}} B_j^{{m_k}} = \delta_{ij}$.
Duality of $\Lambda_{n_k}$ is proved similarly.
\end{proof}

Following the construction laid out in \cite{DP82},
we construct bases in $\Pi_{m_k}$ that are dual to subsets of the functionals in
$\Lambda^{m_r}$.
The subsets are 
$$\Mu_{m_k} = [\mu^{m_k}_0, \ldots, \mu_{m_k}^{m_k}]
:=[\lambda_{\alpha^k_0}^{m_r}, \ldots, \lambda_{\alpha^k_{m_k}}^{m_r}]$$
and
$$\Mu_{n_k} = [\mu^{n_k}_0, \ldots, \mu_{n_k}^{n_k}]
:=[\lambda_{\beta^k_0}^{n_r}, \ldots, \lambda_{\beta^k_{n_k}}^{n_r}],$$
with respect to the grid-points $(\alpha_i^k, \beta_j^k)$ in $G_{\m,\n}$.
Equivalently, we write
$\Mu_{m_k}  = \Lambda_{m_r}(\alpha^k)$
and
$\Mu_{n_k}  = \Lambda_{n_r}(\beta^k)$.
Based on results by the author (\cite{K13,K14}),
$\Mu_{m_k}$ is linearly independent on $\Pi_{m_k}$ and
$\Mu_{n_k}$ is linearly independent on $\Pi_{n_k}$.
Hence, we can construct dual bases.
Let
$D^{m_k} = [D_0^{m_k}, \ldots, D^{m_k}_{m_k}]$ be the basis for
$\Pi_{m_k}$ dual to $\Mu_{m_k}$,
and let
$D^{n_k} = [D_0^{n_k}, \ldots, D^{n_k}_{n_k}]$ be the basis for
$\Pi_{n_k}$ dual to $\Mu_{n_k}$.

We can find explicit representations for the bases $D^{m_k}$
and $D^{n_k}$ as follows.
Since $\Pi_{m_k}$ embeds into $\Pi_{m_r}$, there is a 
matrix $E_{m_k}^{m_r}$  (the degree elevation matrix) such that
$B^{m_k} = B^{m_r} E_{m_k}^{m_k}$.
Recall that $\Lambda_{m_r}$ is dual to $B^{m_r}$.
Therefore, $\Lambda_{m_r}^T B^{m_r} = I$, and so
the matrix $E_{m_k}^{m_r}$
can be computed from
$$
E_{m_k}^{m_r} = \Lambda_{m_r}^T B^{m_r} E_{m_k}^{m_r} = \Lambda_{m_r}^T B^{m_k}.
$$
Since $D^{m_k}$ is a basis for $\Pi_{m_k}$, we can find a transformation
$A$ so that $D^{m_k} = B^{m_k} A$.
By duality,
$$
I = \Mu_{m_k}^{T}  D^{m_k}
= \Lambda_{m_r}(\alpha^k)^T  B^{m_k} A
= I(\alpha^k,:) \Lambda_{m_r}^T B^{m_k} A
= I(\alpha^k,:) E_{m_k}^{n_k} A
= E_{m_k}^{n_k}(\alpha^k,:) A,
$$
and so $A = E_{m_k}^{m_r} (\alpha^k,:)^{-1}$.
Note that invertibility of $E_{m_k}^{m_r}(\alpha^k,:)$ and $E_{n_k}^{n_r}(\beta^k,:)$ 
follows from linear independence of $M_{m_k}$ and $M_{n_k}$, 
as was established in (\cite{K13,K14}).
Therefore, 
$$
D^{m_k} = B^{m_k} E_{m_k}^{m_r} (\alpha^k,:)^{-1}
$$
with
$E_{m_k}^{m_r} = \Lambda_{m_r}^T B^{m_k}$.
Likewise,
$$
D^{n_k} = B^{n_k} E_{n_k}^{n_r} (\beta^k,:)^{-1}
$$
with
$E_{n_k}^{n_r} = \Lambda_{n_r}^T B^{n_k}$.

We define our univariate quasi-interpolants as follows:
$$
P_{m_k} : f \mapsto  D^{m_k} \Mu_{m_k}^{T} f
= \sum_{i=0}^{m_k} \big(\mu_{i}^{m_k}f\big)\  D_i^{m_k}
$$
and
$$
Q_{n_k}  : g \mapsto D^{n_k} \Mu_{n_k}^T f
= \sum_{j=0}^{n_k} \big(\mu_{j}^{n_k}f\big)\  D_j^{n_k}.
$$
Now, we can verify a couple facts.

\begin{prop}
\
\begin{enumerate}
\item $P_{m_k}$ and $Q_{n_k}$ are linear projectors.
\item $P_{m_k}P_{m_\ell} = P_{m_\ell} = P_{m_\ell} P_{m_k}$ 
and $Q_{n_k}Q_{n_\ell} = Q_{n_\ell} = Q_{n_\ell} Q_{n_k}$ when $\ell \leq k$.
\end{enumerate}
\end{prop}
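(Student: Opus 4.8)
The plan is to establish both claims directly from duality, $\mu_i^{m_k} D_j^{m_k} = \delta_{ij}$, together with the nesting of the evaluation-functional collections that underlies the definition of the $\Mu_{m_k}$. Since $Q_{n_k}$ is defined symmetrically, it suffices to argue everything for $P_{m_k}$.

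For part (1), I would first observe that $P_{m_k}$ is linear because it is a composition of the linear maps $f \mapsto \Mu_{m_k}^T f$ and multiplication by the row vector $D^{m_k}$. To see it is a projector, note that its range lies in $\Pi_{m_k} = \Span D^{m_k}$, so it is enough to check $P_{m_k} p = p$ for $p \in \Pi_{m_k}$. Writing $p = D^{m_k} c = \sum_j c_j D_j^{m_k}$ and applying $\mu_i^{m_k}$, duality gives $\mu_i^{m_k} p = c_i$, hence $P_{m_k} p = \sum_i (\mu_i^{m_k} p) D_i^{m_k} = \sum_i c_i D_i^{m_k} = p$. Thus $P_{m_k}^2 = P_{m_k}$ on all continuous functions, which is the projector property. (Equivalently, in matrix form $P_{m_k} P_{m_k} f = D^{m_k}\Mu_{m_k}^T D^{m_k}\Mu_{m_k}^T f = D^{m_k} I \,\Mu_{m_k}^T f = P_{m_k}f$, using $\Mu_{m_k}^T D^{m_k} = I$.)

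For part (2), assume $\ell \le k$. The key structural fact is that $\alpha^\ell \subset \alpha^k$ (established in the section on quasi-uniform grids), so every functional $\mu^{m_\ell}_j = \lambda^{m_r}_{\alpha^\ell_j}$ appearing in $\Mu_{m_\ell}$ also appears among the functionals $\mu^{m_k}_i = \lambda^{m_r}_{\alpha^k_i}$ in $\Mu_{m_k}$. Now I would compute $P_{m_k} P_{m_\ell}$: since $P_{m_\ell} f \in \Pi_{m_\ell} \subset \Pi_{m_k}$, part (1) (the reproduction property of $P_{m_k}$ on $\Pi_{m_k}$) gives $P_{m_k} P_{m_\ell} f = P_{m_\ell} f$, so $P_{m_k} P_{m_\ell} = P_{m_\ell}$. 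For the other order, $P_{m_\ell} P_{m_k} f = \sum_j \big(\mu^{m_\ell}_j (P_{m_k} f)\big) D^{m_\ell}_j$; here $\mu^{m_\ell}_j = \lambda^{m_r}_{\alpha^\ell_j}$ is one of the functionals $\mu^{m_k}_i$ used to build $P_{m_k}f$, and since $P_{m_k}$ reproduces the values of $f$ at exactly those functionals — i.e. $\mu^{m_k}_i (P_{m_k} f) = \mu^{m_k}_i f$ by duality, as $\mu^{m_k}_i(P_{m_k}f) = \sum_t (\mu^{m_k}_t f)\,\mu^{m_k}_i D^{m_k}_t = \mu^{m_k}_i f$ — we get $\mu^{m_\ell}_j(P_{m_k}f) = \mu^{m_\ell}_j f$. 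Hence $P_{m_\ell} P_{m_k} f = \sum_j (\mu^{m_\ell}_j f) D^{m_\ell}_j = P_{m_\ell} f$, completing the chain of equalities. The argument for the $Q_{n_k}$ is identical with $\beta^\ell \subset \beta^k$ in place of $\alpha^\ell \subset \alpha^k$.

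The only point requiring care — the "main obstacle," though it is mild — is the identity $\mu^{m_k}_i (P_{m_k} f) = \mu^{m_k}_i f$ for the sampling functionals used in $P_{m_k}$'s own definition; this is what makes $P_{m_\ell} P_{m_k} = P_{m_\ell}$ work even though $P_{m_k} f \ne f$ in general. It is immediate from duality $\mu^{m_k}_i D^{m_k}_t = \delta_{it}$, but it is the step that has to be stated explicitly rather than waved through, and it is exactly where the nesting $\alpha^\ell \subset \alpha^k$ (equivalently $\Pi_{m_\ell}\subset\Pi_{m_k}$ matched with the corresponding functional containment) is used. Everything else is bookkeeping with the matrix identities $\Mu_{m_k}^T D^{m_k} = I$ already available from the duality lemma.
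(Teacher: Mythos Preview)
Your proof is correct. Part (1) is handled exactly as in the paper (the matrix identity $\Mu_{m_k}^T D^{m_k}=I$). For part (2) you take a slightly different, more conceptual route than the paper: for $P_{m_k}P_{m_\ell}=P_{m_\ell}$ you invoke $\ran P_{m_\ell}=\Pi_{m_\ell}\subset\Pi_{m_k}$ together with the reproduction property just proved, while for $P_{m_\ell}P_{m_k}=P_{m_\ell}$ you use the \emph{functional} nesting $\alpha^\ell\subset\alpha^k$ to get $\mu_j^{m_\ell}(P_{m_k}f)=\mu_j^{m_\ell}f$. The paper instead argues both directions via the degree-elevation matrix $E_{m_\ell}^{m_k}$ with $B^{m_\ell}=B^{m_k}E_{m_\ell}^{m_k}$, manipulating the operator products as matrix products. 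Your argument and the paper's are dual to each other --- you work on the side of the functionals, the paper on the side of the bases --- and yours has the advantage of making transparent exactly where the hypothesis $\ell\le k$ enters on each side (space inclusion for one direction, functional inclusion for the other), without needing to introduce $E_{m_\ell}^{m_k}$.
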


\begin{proof}
For (1), linearity is straight forward, and idempotency follows by duality:
$$
P_{m_k}^2f = D^{m_k} \Mu_{m_k}^T \Big(D^{m_k} \Mu_{m_k}^Tf\Big)
= D^{m_k} \Big(\Mu_{m_k}^T D^{m_k} \Big)\Mu_{m_k}^Tf
= D^{m_k} \Mu_{m_k}^Tf
= P_{m_k} f,
$$
For (2), define, as above, the degree elevation matrix
$E_{m_\ell}^{m_k}$ embeds the basis for $\Pi_{m_\ell}$ into $\Pi_{m_k}$ by
$B^{m_\ell} = B^{m_k} E_{m_\ell}^{m_k}$.
Thus,
$$
P_{m_k}P_{m_\ell} 
  = B^{m_k} \Mu_{m_k}^T \big(B^{m_\ell} \Mu_{m_\ell}^T\big)
  = B^{m_k} \big(\Mu_{m_k}^T B^{m_k} \big)E_{m_\ell}^{m_k} \Mu_{m_\ell}^T
  = B^{m_k} E_{m_\ell}^{m_k} \Mu_{m_\ell}^T
  = B^{m_\ell} \Mu_{m_\ell}^T
  = P_{m_\ell},
$$
and
$$
P_{m_\ell} P_{m_k}
  = B^{m_\ell} \Mu_{m_\ell}^T \big(B^{m_k} \Mu_{m_k}^T\big)
  = B^{m_\ell} \Mu_{m_\ell}^T \big(B^{m_k} E_{m_\ell}^{m_k}\big)\Mu_{m_\ell}^T
  = B^{m_\ell} \big(\Mu_{m_\ell}^T B^{n_\ell}\big) \Mu_{m_\ell}^T
  = B^{m_\ell} \Mu_{m_\ell}^T
  = P_{m_\ell.}
$$
The proofs for $Q_{n_k}$ are analogous.
\end{proof}

Now, we establish bounds for these projectors.

\begin{theorem}
\label{thm3}
For all $f \in C[a,b]$ and $g\in C[c,d]$,
$$
\norm{P_{m_k} f}_{\infty, [a,b]}  \leq C_{m_r,m_k} \norm{f}_{\infty,[a,b]}
$$
and
$$
\norm{Q_{n_k} g}_{\infty, [c,d]}  \leq C_{n_r,n_k} \norm{g}_{\infty,[c,d]},
$$
where $C_{m_r,m_k}$ is a constant depending only on $m_r$ and $m_k$,
and $C_{n_r,n_k}$ is a constant depending only on $n_r$ and $n_k$.
\end{theorem}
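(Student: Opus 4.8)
The plan is to work directly from the explicit formula $P_{m_k}f=\sum_{i=0}^{m_k}(\mu_i^{m_k}f)\,D_i^{m_k}$ and exploit the two defining properties of the Bernstein basis on an interval: nonnegativity, $B_j^{m_k}(x)\ge 0$, and partition of unity, $\sum_{j}B_j^{m_k}(x)=1$ for $x\in[a,b]$. Since each $\mu_i^{m_k}$ is a bounded functional on $C[a,b]$, for $x\in[a,b]$ we get the pointwise estimate
$$
|P_{m_k}f(x)|\le\sum_{i=0}^{m_k}|\mu_i^{m_k}f|\,|D_i^{m_k}(x)|\le\norm{f}_{\infty,[a,b]}\,\Big(\max_{0\le i\le m_k}\norm{\mu_i^{m_k}}_{*}\Big)\sum_{i=0}^{m_k}|D_i^{m_k}(x)|,
$$
where $\norm{\cdot}_*$ is the $C[a,b]$-dual norm. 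So it suffices to bound $\max_i\norm{\mu_i^{m_k}}_*$ and $\sup_{x\in[a,b]}\sum_i|D_i^{m_k}(x)|$ by quantities depending only on $m_r$ and $m_k$.

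First factor. By construction $\mu_i^{m_k}=\lambda_{\alpha_i^k}^{m_r}$ is one of the functionals in $\Lambda_{m_r}=\Delta_{m_r}T_{m_r}^{-T}$, so $\lambda_\ell^{m_r}=\sum_p(T_{m_r}^{-1})_{\ell p}\,\delta_{x_p^{m_r}}$ is a finite linear combination of point evaluations at the distinct nodes $x_p^{m_r}\in[a,b]$, whence $\norm{\lambda_\ell^{m_r}}_*=\sum_p|(T_{m_r}^{-1})_{\ell p}|\le\max_\ell\sum_p|(T_{m_r}^{-1})_{\ell p}|$. The essential observation is that the entries $T_{m_r}[i,j]=B_j^{m_r}(x_i^{m_r})=\binom{m_r}{j}(1-i/m_r)^{m_r-j}(i/m_r)^j$ do not depend on the interval $[a,b]$; hence $T_{m_r}$, and therefore $\max_\ell\sum_p|(T_{m_r}^{-1})_{\ell p}|$ — finite because $T_{m_r}$ is invertible by the duality Lemma — is a constant depending only on $m_r$. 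Since the indices $\alpha_i^k$ lie in $\{0,\ldots,m_r\}$, $\max_i\norm{\mu_i^{m_k}}_*$ is bounded by this same $m_r$-dependent constant.

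Second factor. From $D^{m_k}=B^{m_k}E_{m_k}^{m_r}(\alpha^k,:)^{-1}$ we have $D_i^{m_k}=\sum_j\big(E_{m_k}^{m_r}(\alpha^k,:)^{-1}\big)_{ji}B_j^{m_k}$, so using $B_j^{m_k}\ge 0$ and $\sum_jB_j^{m_k}=1$ on $[a,b]$,
$$
\sum_{i}|D_i^{m_k}(x)|\le\sum_j\Big(\sum_i\big|\big(E_{m_k}^{m_r}(\alpha^k,:)^{-1}\big)_{ji}\big|\Big)B_j^{m_k}(x)\le\max_j\sum_i\big|\big(E_{m_k}^{m_r}(\alpha^k,:)^{-1}\big)_{ji}\big|.
$$
Here the degree-elevation matrix $E_{m_k}^{m_r}$ has purely combinatorial (binomial) entries, and the index set $\alpha^k=[im_r/m_k:i=0,\ldots,m_k]$ depends only on $m_k$ and $m_r$; both are independent of $[a,b]$. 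The square submatrix $E_{m_k}^{m_r}(\alpha^k,:)$ is invertible by the linear independence of $M_{m_k}$ established in \cite{K13,K14}, so the right-hand side is a finite constant depending only on $m_k$ and $m_r$.

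Combining the two bounds yields $\norm{P_{m_k}f}_{\infty,[a,b]}\le C_{m_r,m_k}\norm{f}_{\infty,[a,b]}$ with $C_{m_r,m_k}$ the product of the two constants, depending only on $m_r$ and $m_k$. The bound for $Q_{n_k}$ on $C[c,d]$ is identical after replacing $m$-quantities by $n$-quantities, $T_{m_r}$ by $T_{n_r}$, and $E_{m_k}^{m_r}(\alpha^k,:)$ by $E_{n_k}^{n_r}(\beta^k,:)$. The step that needs the most care is the interval-independence claim: one must verify from the barycentric form of the Bernstein functions that neither $T_{m_r}$ nor $E_{m_k}^{m_r}$ carries any dependence on $[a,b]$ (only their evaluation nodes and row/column indices matter). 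Everything else is Hölder's inequality together with the nonnegativity and partition-of-unity of the Bernstein basis, and the invertibility facts are quoted from the preceding results.
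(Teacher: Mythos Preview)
Your proof is correct and follows essentially the same two-step structure as the paper's: bound each functional $\mu_i^{m_k}=\lambda_{\alpha_i^k}^{m_r}$ by a constant depending only on $m_r$, then bound $\sum_i|D_i^{m_k}(x)|$ via the matrix $E_{m_k}^{m_r}(\alpha^k,:)^{-1}$ and the partition-of-unity property of $B^{m_k}$. Your version is in fact slightly sharper and more self-contained: where the paper invokes Lemma~2.4.1 of \cite{SL2007} for the functional bound you compute it directly as the $\ell$-th absolute row sum of $T_{m_r}^{-1}$ (noting interval-independence of $T_{m_r}$), and your estimate $\sum_i|D_i^{m_k}(x)|\le\max_j\sum_i|(E_{m_k}^{m_r}(\alpha^k,:)^{-1})_{ji}|$ avoids the paper's extraneous factor of $(m_k+1)$.
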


\begin{proof}
The proofs of the two inequalities are identical, hence we'll prove just the first.
From above,
$$
P_{m_k}f(x) = \sum_{i=0}^{m_k} (\mu_{i}^{m_k}f)  D_i^{m_k}(x)
$$
with
$D^{m_k} = E_{m_k}^{m_r}(\alpha_k,:)^{-1} B^{m_k}$.
By Lemma 2.4.1 in \cite{SL2007},
$|\mu_i^{m_k}f| = |\lambda_{\alpha^k_i}^{n_r} f| \leq C_{m_r} \norm{f}_{\infty,[a,b]}$
for some constant $C_{m_r}$ depending only on $m_r$.
Thus,
\begin{align*}
|P_{m_k}f(x)|  &\leq \sum_{i=0}^{m_k} |\mu_{i}^{m_k}f|  |D_i^{m_k}(x)| 
 \leq C_{m_r} \norm{f}_{\infty,[a,b]} \sum_{i=0}^{m_k} |D_i^{m_k}(x)|.
\end{align*}
Let $A := E_{m_k}^{m_r} (\alpha^k,:)^{-1}$.
Then, for $x \in [a,b]$,
\begin{align*}
 \sum_{i=0}^{m_k} |D_i^{m_k}(x)| 
 &= \sum_{i=0}^{m_k} |A(i,:) B^{m_k}(x)| 
 = \sum_{i=0}^{m_k} \Big|\sum_{j=0}^{m_k} A(i,j) B_j^{m_k}(x)\Big| \\
 &\leq \norm{A}_\infty \sum_{i=0}^{m_k}   \sum_{j=0}^{m_k} B_j^{m_k}(x) 
 = \norm{A}_\infty \sum_{i=0}^{m_k}   1 
 = (m_k+1) \norm{A}_\infty,
\end{align*}
with
$$
\norm{A}_\infty = \norm{E_{m_k}^{m_r} (\alpha^k,:)^{-1}}_\infty
 = \frac{1}{\min_{\norm{x}_\infty = 1} \big(\norm{E_{m_k}^{m_r} (\alpha^k,:)x}_\infty\big)^{-1}}.
$$
Thus, we have established the desired result with
$$
C_{m_r,m_k} :=  C_{m_r} (m_k+1)\norm{E_{m_k}^{m_r} (\alpha^k,:)^{-1}}_\infty.
$$
\end{proof}

\section{Discretely Blended Quasi-Interpolants}

In this section we construct quasi-interpolants over our 
quasi-uniform grids $G_{\m,\n}$.
Our construction is based on Generalized Biermann Interpolation (\cite{DP82}) 
and Boolean Sum methods (\cite{DS89}), combined with the author's work on dual bases 
in subspaces based on the Bernstein basis (\cite{K13,K14}).
We begin by extending the univariate projectors
to bivariate projectors.
Let $F$ be a continuous bivariate function.
Then, with $I$ the identity operator,
let $\PP_{m_k} = P_{m_k} \times I$,
a projector that acts only on the first coordinate of bivariate functions,
and let $\QQ_{n_k} = I \times Q_{n_k}$,
a projector that acts only on the second coordinate.
Therefore, we can define tensor product projectors as
$$
\PP_{m_k}\QQ_{n_{r-k}} F(u,v)  = 
\sum_{i=0}^{m_k} \sum_{j=0}^{n_{r-k}} 
(\mu_i^{m_k} \times \mu_j^{n_{r-k}}) F
\ D_i^{m_k}(u) D_j^{n_{r-k}}(v).
$$
Like Biermann interpolation in \cite{DP82, DS89}, our discretely blended projector
is defined as a Boolean sum of tensor product projectors as follows:
\begin{align*}
\BB_{\m,\n} := & \bigoplus_{k=0}^r \PP_{m_k}\QQ_{n_{r-k}}
= \PP_{m_0} \oplus \QQ_{n_r}  + 
\PP_{m_1} \oplus \QQ_{n_{r-1}}  +  \cdots + 
\PP_{m_r} \oplus \QQ_{n_0},
\end{align*}
with Boolean sum
$$
\PP_{m_k} \oplus \QQ_{n_{\ell}} := \PP_{m_k} + \QQ_{n_{\ell}} - \PP_{m_k} \QQ_{n_{\ell}}.
$$
From (\cite{DP82,DS89}) we have:
\begin{prop}
\begin{enumerate}
\ \\
\item $\BB_{\m,\n}$ is a linear projector onto $S_{\m,\n}$.
\item $\BB_{\m,\n} = \sum_{k=0}^r \PP_{m_k}\QQ_{n_{r-k}} 
  - \sum_{k=0}^{r-1} \PP_{m_k}\QQ_{n_{r-k-1}}. $
\end{enumerate}
\end{prop}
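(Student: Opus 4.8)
The plan is to deduce both parts from the single operator identity $\BB_{\m,\n}=I-\prod_{k=0}^r(I-T_k)$, where $T_k:=\PP_{m_k}\QQ_{n_{r-k}}$ and $I$ is the identity operator. First I would verify that the $T_k$ form a commuting family of idempotents. Since $\PP_{m_k}=P_{m_k}\times I$ and $\QQ_{n_\ell}=I\times Q_{n_\ell}$ act on disjoint variables they commute with one another, and the preceding proposition on the projectors $P_{m_k},Q_{n_k}$ makes $\{P_{m_k}\}$ and $\{Q_{n_k}\}$ into ``chains,'' i.e. $\PP_{m_a}\PP_{m_b}=\PP_{m_{\min(a,b)}}$ and $\QQ_{n_a}\QQ_{n_b}=\QQ_{n_{\min(a,b)}}$. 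Hence for any nonempty $S=\{k_1<\cdots<k_s\}\subseteq\{0,\dots,r\}$ one gets $\prod_{k\in S}T_k=\PP_{m_{k_1}}\QQ_{n_{r-k_s}}$, depending only on $\min S$ and $\max S$; in particular $T_kT_\ell=T_\ell T_k$, and each $T_k$ is idempotent. Using $A\oplus B=I-(I-A)(I-B)$ together with the (unconditional) associativity of $\oplus$, a short induction on $r$ then gives the identity $\BB_{\m,\n}=I-\prod_{k=0}^r(I-T_k)$.

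For part (1), linearity of $\BB_{\m,\n}$ is immediate from its definition through the linear maps $P_{m_k},Q_{n_k}$. Expanding $\prod_{k=0}^r(I-T_k)$ and cancelling the leading $I$ writes $\BB_{\m,\n}$ as a signed sum of operators $\prod_{k\in S}T_k=\PP_{m_{\min S}}\QQ_{n_{r-\max S}}$, each with range inside $\Pi_{m_{\min S}}\otimes\Pi_{n_{r-\max S}}\subseteq\Pi_{m_{\max S}}\otimes\Pi_{n_{r-\max S}}\subseteq S_{\m,\n}$; so $\ran\BB_{\m,\n}\subseteq S_{\m,\n}$. Conversely, any $p\in S_{\m,\n}$ can be written $p=\sum_{k=0}^r p_k$ with $p_k\in\Pi_{m_k}\otimes\Pi_{n_{r-k}}$; since $P_{m_k}$ (being a projector with range $\Pi_{m_k}$) fixes $\Pi_{m_k}$ and $Q_{n_{r-k}}$ fixes $\Pi_{n_{r-k}}$, we have $T_kp_k=p_k$, i.e. $(I-T_k)p_k=0$, and because the factors commute $(I-T_k)$ may be applied last in $\prod_\ell(I-T_\ell)$, forcing $\prod_\ell(I-T_\ell)p_k=0$ and hence $\BB_{\m,\n}p_k=p_k$. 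Summing gives $\BB_{\m,\n}p=p$. Thus $\BB_{\m,\n}$ maps into $S_{\m,\n}$ and is the identity on it, which yields both idempotency and $\ran\BB_{\m,\n}=S_{\m,\n}$.

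For part (2), I would expand the identity $\BB_{\m,\n}=I-\prod_k(I-T_k)$ by inclusion--exclusion, $\BB_{\m,\n}=\sum_{\emptyset\ne S\subseteq\{0,\dots,r\}}(-1)^{|S|-1}\PP_{m_{\min S}}\QQ_{n_{r-\max S}}$, and group the terms by the pair $(a,b)=(\min S,\max S)$ with $a\le b$. The only set with $a=b$ is $S=\{a\}$, contributing $+\PP_{m_a}\QQ_{n_{r-a}}$; for $a<b$ the contributing sets are $\{a\}\cup T\cup\{b\}$ with $T\subseteq\{a+1,\dots,b-1\}$, whose total signed weight is $-\sum_{j=0}^{b-a-1}\binom{b-a-1}{j}(-1)^j=-(1-1)^{b-a-1}$, which equals $-1$ when $b=a+1$ and $0$ otherwise. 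Collecting the survivors yields exactly $\BB_{\m,\n}=\sum_{k=0}^r\PP_{m_k}\QQ_{n_{r-k}}-\sum_{k=0}^{r-1}\PP_{m_k}\QQ_{n_{r-k-1}}$; an induction on $r$ as in \cite{DP82,DS89} would do equally well. I expect the main obstacle to be purely organizational: correctly pinning down $\prod_{k\in S}T_k=\PP_{m_{\min S}}\QQ_{n_{r-\max S}}$ — which is exactly where the divisibility hypotheses $m_k\mid m_{k+1}$ and $n_k\mid n_{k+1}$ enter, through the chain property of the univariate projectors — and then running the inclusion--exclusion collapse cleanly; everything else is formal manipulation of commuting idempotents.
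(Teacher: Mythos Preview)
Your proof is correct. The paper does not actually supply its own argument for this proposition; it simply attributes the result to \cite{DP82,DS89} and states it. What you have written is precisely the standard Boolean--sum derivation found in those references: rewrite the iterated Boolean sum as $\BB_{\m,\n}=I-\prod_{k=0}^r(I-T_k)$ with $T_k=\PP_{m_k}\QQ_{n_{r-k}}$, use the chain relations $\PP_{m_a}\PP_{m_b}=\PP_{m_{\min(a,b)}}$, $\QQ_{n_a}\QQ_{n_b}=\QQ_{n_{\min(a,b)}}$ from the preceding proposition to identify $\prod_{k\in S}T_k=\PP_{m_{\min S}}\QQ_{n_{r-\max S}}$, and then collapse the inclusion--exclusion expansion. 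Both the projector property (part~1) and the telescoped formula (part~2) fall out exactly as you describe.

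One small remark on your closing comment: the divisibility hypotheses $m_k\mid m_{k+1}$, $n_k\mid n_{k+1}$ are used in the paper to \emph{set up} the functionals $\Mu_{m_k}=\Lambda_{m_r}(\alpha^k)$ so that they nest, but the chain identity $P_{m_k}P_{m_\ell}=P_{m_\ell}=P_{m_\ell}P_{m_k}$ is proved in the preceding proposition directly from duality and degree elevation, and it is that identity (not divisibility per se) that your argument invokes. So it would be slightly more accurate to say the needed input is the already--established absorption property of the univariate projectors, with divisibility sitting one level further back in the construction.
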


The second part of this proposition is important as reduces the number
of terms needed to represent $\BB_{\m,\n}$.
In view of Theorem \ref{thm3}, we have the following estimates:

\begin{theorem}
\label{thm4}
For $F \in C([a,b]\times[c,d])$: 
\begin{enumerate}
\item
$\displaystyle
\norm{\PP_{m_k} \QQ_{n_\ell} F}_{\infty, [a,b]\times[c,d]}  
\leq C_{m_r,m_k} C_{n_r,n_\ell} \norm{F}_{\infty,[a,b]\times[c,d]},$
\item
$\displaystyle
\norm{\PP_{m_k} \oplus \QQ_{n_\ell} F}_{\infty, [a,b]\times[c,d]}  
\leq (C_{m_r,m_k} + C_{n_r,n_\ell} + C_{m_r,m_k} C_{n_r,n_\ell})
\norm{F}_{\infty,[a,b]\times[c,d]},$
\item
$\displaystyle
\norm{\BB_{\m,\n}F}_{\infty, [a,b]\times[c,d]}  
\leq 
\Big(\sum_{k=0}^r C_{m_r,m_k} C_{n_r,n_{n-k}} 
+ \sum_{k=0}^{r-1} C_{m_r,m_k} C_{n_r,n_{n-k-1}}  \Big)
\norm{F}_{\infty,[a,b]\times[c,d]},$
\end{enumerate}
for constants $C_{m_r,m_k}$ depending only on $m_r$ and $m_k$,
and constants $C_{n_r,n_\ell}$ depending only on $n_r$ and $n_\ell$.
\end{theorem}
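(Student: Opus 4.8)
The plan is to derive all three estimates in Theorem \ref{thm4} directly from the univariate bounds in Theorem \ref{thm3} together with the representation of $\BB_{\m,\n}$ given in part (2) of the preceding proposition, using only the triangle inequality and submultiplicativity of the operator sup-norm. The key observation is that $\PP_{m_k} = P_{m_k}\times I$ acts only in the first variable and $\QQ_{n_\ell} = I\times Q_{n_\ell}$ acts only in the second, so each carries its univariate operator bound into the bivariate setting: for fixed $v$, $F(\cdot,v)\in C[a,b]$, hence $\norm{\PP_{m_k}F(\cdot,v)}_{\infty,[a,b]}\le C_{m_r,m_k}\norm{F(\cdot,v)}_{\infty,[a,b]}\le C_{m_r,m_k}\norm{F}_{\infty,[a,b]\times[c,d]}$, and taking the supremum over $v$ gives $\norm{\PP_{m_k}}_\infty\le C_{m_r,m_k}$, and similarly $\norm{\QQ_{n_\ell}}_\infty\le C_{n_r,n_\ell}$.

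For part (1), I would write $\PP_{m_k}\QQ_{n_\ell} = \PP_{m_k}\circ\QQ_{n_\ell}$ and apply the two one-variable bounds in succession: $\norm{\PP_{m_k}\QQ_{n_\ell}F}_\infty \le C_{m_r,m_k}\norm{\QQ_{n_\ell}F}_\infty \le C_{m_r,m_k}C_{n_r,n_\ell}\norm{F}_\infty$. (One should check the composition order is legitimate — since $\PP_{m_k}$ and $\QQ_{n_\ell}$ act on disjoint variables they commute, so either order works; this is where the tensor-product structure of $\PP_{m_k}\QQ_{n_{r-k}}F$ displayed just before the proposition is used to confirm it is genuinely the composition.) For part (2), expand the Boolean sum $\PP_{m_k}\oplus\QQ_{n_\ell} = \PP_{m_k}+\QQ_{n_\ell}-\PP_{m_k}\QQ_{n_\ell}$, apply the triangle inequality, and bound the three terms by $C_{m_r,m_k}$, $C_{n_r,n_\ell}$, and (by part (1)) $C_{m_r,m_k}C_{n_r,n_\ell}$ respectively, giving the stated sum.

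For part (3), I would invoke part (2) of the proposition to write $\BB_{\m,\n} = \sum_{k=0}^r \PP_{m_k}\QQ_{n_{r-k}} - \sum_{k=0}^{r-1}\PP_{m_k}\QQ_{n_{r-k-1}}$, apply the triangle inequality over all $2r+1$ terms, and bound each by part (1): the $k$-th term of the first sum contributes $C_{m_r,m_k}C_{n_r,n_{r-k}}$ and the $k$-th term of the second contributes $C_{m_r,m_k}C_{n_r,n_{r-k-1}}$, yielding exactly the bracketed constant in the statement. (I note the statement writes $n_{n-k}$ and $n_{n-k-1}$, which should read $n_{r-k}$ and $n_{r-k-1}$; I would use the corrected indices.)

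Honestly, there is no serious obstacle here — the theorem is a formal consequence of Theorem \ref{thm3} and the algebraic identity for $\BB_{\m,\n}$. The only point requiring a word of care is the promotion of the univariate operator norms to bivariate ones, i.e.\ justifying that applying a one-variable projector slicewise does not increase the constant and that $\PP_{m_k}$ and $\QQ_{n_\ell}$ commute; once that lemma-sized remark is in place, parts (1)--(3) are three lines each of triangle inequality. So the "hard part," such as it is, is simply being precise about the slicewise argument and about which norm $C_{m_r,m_k}$ bounds.
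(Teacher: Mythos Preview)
Your proposal is correct, and for parts (2) and (3) it is exactly what the paper does: expand the Boolean sum (resp.\ the representation $\BB_{\m,\n}=\sum_{k=0}^r \PP_{m_k}\QQ_{n_{r-k}}-\sum_{k=0}^{r-1}\PP_{m_k}\QQ_{n_{r-k-1}}$), apply the triangle inequality, and invoke part (1).

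For part (1) your route differs slightly from the paper's. You promote the univariate bounds of Theorem~\ref{thm3} to bivariate operator bounds via the slicewise argument and then compose $\PP_{m_k}\circ\QQ_{n_\ell}$. The paper instead reopens the explicit double-sum formula $\PP_{m_k}\QQ_{n_\ell}F=\sum_{i,j}(\mu_i^{m_k}\times\mu_j^{n_\ell})F\,D_i^{m_k}D_j^{n_\ell}$, bounds the product functional by $C_{m_r}C_{n_r}\norm{F}_\infty$, and then bounds $\sum_i|D_i^{m_k}|$ and $\sum_j|D_j^{n_\ell}|$ separately---essentially replaying the proof of Theorem~\ref{thm3} in two variables rather than citing it. Both approaches produce the same constant $C_{m_r,m_k}C_{n_r,n_\ell}$; yours is a bit more economical and makes clearer that the result is a purely formal consequence of the univariate theorem, while the paper's version keeps the explicit definitions of the constants visible. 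Your observation about the index typo ($n_{n-k}$ should be $n_{r-k}$) is also correct.
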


\begin{proof}
Following the proof of Theorem \ref{thm3},
we start with the bound
$$|\mu_i^{m_k} \times \mu_j^{n_\ell} F|
 \leq C_{m_r} C_{n_r} \norm{F}_{\infty, [a,b] \times [c,d]}.$$
Therefore,
\begin{align*}
|\PP_{m_k}\QQ_{n_{\ell}} F(u,v)|
&=
\Big|
\sum_{i=0}^{m_k} \sum_{j=0}^{n_\ell} 
(\mu^{m_k}_{i} \times \mu^{n_\ell}_{j}) F 
\ D_i^{m_k}(u) D_j^{n_\ell}(v)
\Big| \\
&\leq 
 C_{m_r} C_{n_r} \norm{F}_{\infty, [a,b] \times [c,d]}
\sum_{i=0}^{m_k} |D_i^{m_k}(u)|
\sum_{j=0}^{n_\ell} |D_j^{n_\ell}(v)| \\
&\leq 
 C_{m_r} C_{n_r} \norm{F}_{\infty, [a,b] \times [c,d]}
(m_k+1)\norm{E_{m_k}^{m_r} (\alpha^k,:)^{-1}}_\infty
(n_\ell+1)\norm{E_{m_\ell}^{m_r} (\beta^\ell,:)^{-1}}_\infty \\
&= 
C_{m_r,m_k} C_{n_r,n_\ell} \norm{F}_{\infty, [a,b] \times [c,d]}
\end{align*}
with
$C_{m_r,m_k} := C_{m_r} (m_k+1)\norm{E_{m_k}^{m_r} (\alpha^k,:)^{-1}}_\infty$
and
$C_{n_r,n_\ell}  := C_{n_r} (n_\ell+1)\norm{E_{m_\ell}^{m_r} (\beta^\ell,:)^{-1}}_\infty$.
This establishes the first inequality. 
The second follows from
$$
\norm{\PP_{m_k} \oplus \QQ_{n_\ell} F}_{\infty, [a,b]\times[c,d]}  
 \leq
\norm{\PP_{m_k} F}_{\infty, [a,b]\times[c,d]}  
+\norm{\QQ_{n_\ell} F}_{\infty, [a,b]\times[c,d]}  
+\norm{\PP_{m_k} \oplus \QQ_{n_\ell} F}_{\infty, [a,b]\times[c,d]},
$$
for arbitrary $(x,y) \in [a,b] \times [c,d]$.
The third estimate follows the first estimate combined with:
\begin{align*}
\norm{\BB_{\m,\n}} &\leq \sum_{k=0}^r \norm{\PP_{m_k}\QQ_{n_{r-k}} }
  + \sum_{k=0}^{r-1} \norm{\PP_{m_k}\QQ_{n_{r-k-1}}}.
\end{align*}
\end{proof}

Now, we will construct a basis for $S_{\m,\n}$ that is dual to $\BB_{\m,\n}$.
The sequences $\alpha^k$ and $\beta^k$ can be viewed as bijective maps
$\alpha^k : [0, \ldots, m_k] \rightarrow \NN_0^{m_k+1}$
and
$\beta^k : [0, \ldots, n_k] \rightarrow \NN_0^{n_k+1}$,
with inverses $\halpha^k := (\alpha^k)^{-1}$ and $\hbeta^k := (\beta^k)^{-1}$
defined on $\ran(\alpha^k)$ and $\ran(\beta^k)$, respectively.
For example, if $\alpha^k = [2,4,8]$, 
then $\halpha^k(2) = 1$, 
$\halpha^k(4)= 2$
and
$\halpha^k(8)= 3$,
while $\halpha^k(3)$ is not defined.
Let
$$\PPhi^{\m,\n} := [\Phi_{i,j} : (i,j)\in G_{\m,\n}]$$
with
$$
\Phi_{i,j} := 
  \sum_{k=0}^r D_{\halpha^k(i)}^{m_k}\times D_{\hbeta^{r-k}(j)}^{n_{r-k}}
 -\sum_{k=0}^{r-1} D_{\halpha^k(i)}^{m_k}\times D_{\hbeta^{r-k-1}(j)}^{n_{r-k-1}},
$$
with $D_{\halpha^k(i)}^{m_k} := 0$ if $i \not\in\ran(\alpha^k)$
and $D_{\hbeta^k(j)}^{n_k} := 0$ if $j\not\in\ran(\beta^k)$.
Now, we let
$$\Lambda_{\m,\n} := [\lambda_{i}^{m_r} \times \lambda_{j}^{n_r} : (i,j) \in G_{\m,\n}].$$

\begin{theorem}
$\PPhi^{\m,\n}$ is a basis for $S_{\m,\n}$ dual to $\Lambda_{\m,\n}$.
\end{theorem}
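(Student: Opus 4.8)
The plan is to first re-express $\BB_{\m,\n}$ in the biorthogonal form $\BB_{\m,\n}F = \sum_{(i,j)\in G_{\m,\n}} (\lambda_i^{m_r}\times\lambda_j^{n_r})(F)\,\Phi_{i,j}$, and then read off both the basis property and the duality from the fact (the preceding proposition) that $\BB_{\m,\n}$ is a linear projector onto $S_{\m,\n}$.

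For the reindexing I would begin from part (2) of that proposition, $\BB_{\m,\n} = \sum_{k=0}^r \PP_{m_k}\QQ_{n_{r-k}} - \sum_{k=0}^{r-1}\PP_{m_k}\QQ_{n_{r-k-1}}$, and rewrite each tensor-product summand. Since $\mu_p^{m_k} = \lambda_{\alpha^k_p}^{m_r}$ and $\mu_q^{n_\ell} = \lambda_{\beta^\ell_q}^{n_r}$, the change of index $i = \alpha^k_p$, $j = \beta^{r-k}_q$ (equivalently $p = \halpha^k(i)$, $q = \hbeta^{r-k}(j)$) turns $\PP_{m_k}\QQ_{n_{r-k}}F$ into a sum over $\ran(\alpha^k)\times\ran(\beta^{r-k})$, which is exactly the $k$-th sheet of $G_{\m,\n}$; extending by the convention that the $D$'s vanish off the index ranges, this becomes a sum over all of $G_{\m,\n}$. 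Applying the same substitution to $\PP_{m_k}\QQ_{n_{r-k-1}}F$ produces a sum over $\ran(\alpha^k)\times\ran(\beta^{r-k-1})$, and here I would use $\alpha^k\subset\alpha^{k+1}$ to note $\ran(\alpha^k)\times\ran(\beta^{r-k-1})\subseteq\ran(\alpha^{k+1})\times\ran(\beta^{r-(k+1)})\subseteq G_{\m,\n}$, so the zero convention again lets us sum over $G_{\m,\n}$. Adding the two sums and matching with the definition of $\Phi_{i,j}$ gives the displayed identity (in matrix form, $\BB_{\m,\n}F = \PPhi^{\m,\n}\Lambda_{\m,\n}^T F$).

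Granting that identity, the basis statement is short. Each $\Phi_{i,j}$ is a sum of terms $D_p^{m_k}\times D_q^{n_{r-k}}\in\Pi_{m_k}\otimes\Pi_{n_{r-k}}\subseteq S_{\m,\n}$, so $\Span\PPhi^{\m,\n}\subseteq S_{\m,\n}$; conversely $\BB_{\m,\n}$ fixes $S_{\m,\n}$ pointwise, so every $p\in S_{\m,\n}$ equals $\BB_{\m,\n}p = \sum_{(i,j)\in G_{\m,\n}}(\lambda_i^{m_r}\times\lambda_j^{n_r})(p)\,\Phi_{i,j}\in\Span\PPhi^{\m,\n}$, whence $\Span\PPhi^{\m,\n} = S_{\m,\n}$. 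Since the number of points of $G_{\m,\n}$ equals $\dim S_{\m,\n}$ (its graph is a permutation of $L_{\m,\n}$, as noted in Section 3, and the power basis indexed by $L_{\m,\n}$ has that many elements by Proposition \ref{prop0}), $\PPhi^{\m,\n}$ is a spanning family of $S_{\m,\n}$ of cardinality $\dim S_{\m,\n}$, hence a basis.

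Duality then follows from uniqueness of coordinates in that basis: evaluating the expansion $p = \sum_{(i,j)\in G_{\m,\n}}(\lambda_i^{m_r}\times\lambda_j^{n_r})(p)\,\Phi_{i,j}$ at $p = \Phi_{i',j'}$ and comparing coefficients with the tautological expansion of $\Phi_{i',j'}$ in $\PPhi^{\m,\n}$ forces $(\lambda_i^{m_r}\times\lambda_j^{n_r})(\Phi_{i',j'}) = \delta_{i,i'}\delta_{j,j'}$ for all $(i,j),(i',j')\in G_{\m,\n}$, i.e. $\Lambda_{\m,\n}^T\PPhi^{\m,\n}$ is the identity. The one place that needs genuine care is the reindexing: one must check that each rewritten sum is supported precisely on $G_{\m,\n}$ and that the zero-extension convention for the $D$'s reproduces exactly the two sums defining $\Phi_{i,j}$; once that bookkeeping is done, everything else is routine linear algebra built on the projector property already proved.
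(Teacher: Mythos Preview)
Your argument is correct, and it takes a genuinely different route from the paper's own proof. The paper verifies the duality relation directly: for $(p,q),(i,j)\in G_{\m,\n}$ it evaluates $(\lambda_i^{m_r}\times\lambda_j^{n_r})\Phi_{p,q}$ by applying the functional term-by-term to the two sums defining $\Phi_{p,q}$, restricting the index $k$ to the range where $i\in\ran(\alpha^k)$ and $j\in\ran(\beta^{r-k})$ (respectively $\ran(\beta^{r-k-1})$), recognising each surviving factor as a Kronecker delta via the univariate duality $\Mu_{m_k}^T D^{m_k}=I$, and observing that the two resulting sums of $\delta_{i,p}\delta_{j,q}$ telescope to a single $\delta_{i,p}\delta_{j,q}$. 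Linear independence and hence the basis property then follow from this biorthogonality together with the dimension count.

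Your approach instead pushes the work into a reindexing of the already established identity $\BB_{\m,\n}=\sum_k\PP_{m_k}\QQ_{n_{r-k}}-\sum_k\PP_{m_k}\QQ_{n_{r-k-1}}$, recognising the result as $\PPhi^{\m,\n}\Lambda_{\m,\n}^T$, and then appeals only to soft linear-algebra facts (projector onto $S_{\m,\n}$, cardinality $=\dim$, uniqueness of coordinates). This buys you a cleaner logical structure: you never compute a single pairing $(\lambda_i^{m_r}\times\lambda_j^{n_r})\Phi_{p,q}$ by hand, and the delicate restriction of the summation range in the paper's calculation is replaced by the transparent observation that $\ran(\alpha^k)\times\ran(\beta^{r-k})$ and $\ran(\alpha^k)\times\ran(\beta^{r-k-1})$ both sit inside $G_{\m,\n}$. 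The paper's computation, on the other hand, is self-contained in that it does not rely on the projector proposition, and it makes the telescoping mechanism behind the $\Phi_{i,j}$'s explicit. Either way the substance is the same bookkeeping; your version just packages it through $\BB_{\m,\n}$ rather than through the dual pairing.
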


\begin{proof}
Let $(p,q) \in G_{\m,\n}$ and $(i,j) \in G_{\m,\n}$.
Let $k_1 := \min\{k : i \in \ran(\alpha^k)\}$
and $k_2 := \min\{k : j \in \ran(\beta^k)\}$.
Then,
duality follows by:
\begin{align*}
(\lambda_{i}^{m_r} \times \lambda_{j}^{n_r}) \Phi_{p,q}
&= (\lambda_{i}^{m_r} \times \lambda_{j}^{n_r}) 
  \Big(\sum_{k=0}^r D_{\halpha^k(p)}^{m_k}\times D_{\hbeta^{r-k}(q)}^{n_{r-k}}
 -\sum_{k=0}^{r-1} D_{\halpha^k(p)}^{m_k}\times D_{\hbeta^{r-k-1}(q)}^{n_{r-k-1}}\Big)\\
&= \sum_{k=k_1}^{r-k_2}
  (\lambda_{i}^{m_r} D_{\halpha^k(p)}^{m_k})(\lambda_{j}^{n_r} D_{\hbeta^{r-k}(q)}^{n_{r-k}})
   -\sum_{k=k_1}^{r-k_2-1} 
  (\lambda_{i}^{m_r} D_{\halpha^k(p)}^{m_k})(\lambda_{j}^{n_r} D_{\hbeta^{r-k-1}(q)}^{n_{r-k-1}}) \\
&= \Big(\sum_{k=k_1}^{r-k_2} \delta_{i,p} \delta_{j,q}
   -\sum_{k=k_1}^{r-k_2-1} \delta_{i,p} \delta_{j,q} \Big)\\
&= \delta_{i,p} \delta_{j,q}.
\end{align*}
\end{proof}

Now that we have a dual basis, we can represent our discrete-blended 
quasi-interpolant as follows:
$$
\BB_{\m,\n} F := \Lambda_{\m,\n}^T \PPhi_{\m,\n} F
  = \sum_{(i,j) \in G_{\m,\n}} (\lambda_i^{m_r} \times \lambda_j^{n_r}) F \ \Phi_{i,j}.
$$

\begin{example}
To see how the construction works, consider $m = n = [2,4]$.
In Fig. \ref{f6a}, the quasi-uniform grids are plotted,
and in Fig. \ref{f6b} the discrete blended approximation to 
a function over these grids is plotted.
The construction involves the sum of two surfaces minus a third surface
(a so-called ``correction surface'').
For this example, the discrete blended approximation has the form.
\begin{align*}
&\BB_{[2,4],[2,4]} F (u,v) 
= \sum_{(i,j)\in G_{\m,\n}} b_{i,j} \Phi_{i,j}(u,v) \\
&\ \ = \sum_{i=0}^{4} \sum_{j=0}^{2} b_{i, 2j} D_{\halpha^1(i)}^4(u)D_{\hbeta^0(j)}^2(v)
+ \sum_{i=0}^{2} \sum_{j=0}^{4} b_{2i,j} D_{\halpha^0(i)}^2(u)D_{\hbeta^1(j)}^4(v)
- \sum_{i=0}^{2} \sum_{j=0}^{2} b_{2i,2j} D_{\halpha^0(i)}^2(u)D_{\hbeta^0(j)}^2(v).
\end{align*}
The dual basis arranged with respect to the grid points in $G_{\m,\n}$ can
be viewed as follows:
$$
\PPhi_{\m,\n} = 
\begin{array}{|l|l|l|l|l|}
\hline
    \BBB{0}{0}{4}{2} + \BBB{0}{0}{2}{4} - \BBB{0}{0}{2}{2}
       & \BBB{1}{0}{4}{2}
       & \BBB{2}{0}{4}{2} + \BBB{1}{0}{2}{4} - \BBB{2}{0}{2}{2}
       & \BBB{3}{0}{4}{2}
       & \BBB{4}{0}{4}{2} + \BBB{2}{0}{24} - \BBB{2}{0}{2}{2} \\
\hline
    \BBB{0}{1}{2}{4}  & &\BBB{1}{1}{2}{4} & &\BBB{2}{1}{2}{4} \\
\hline
    \BBB{0}{1}{4}{2} + \BBB{0}{2}{2}{4} - \BBB{0}{1}{2}{2}
      &\BBB{1}{1}{4}{2}
      &\BBB{2}{1}{4}{2} + \BBB{1}{2}{2}{4} - \BBB{2}{1}{2}{2}
      &\BBB{3}{1}{4}{2}
      &\BBB{4}{1}{4}{2} + \BBB{2}{2}{2}{4} - \BBB{2}{1}{2}{2} \\
\hline
    \BBB{0}{1}{3}{4} & &\BBB{1}{1}{3}{4} & &\BBB{2}{1}{3}{4} \\
\hline
    \BBB{0}{2}{4}{2} + \BBB{0}{4}{2}{4} - \BBB{0}{2}{2}{2}
      &\BBB{1}{2}{4}{2}
      &\BBB{2}{2}{4}{2} + \BBB{1}{4}{2}{4} - \BBB{2}{2}{2}{2}
      &\BBB{3}{2}{4}{2}
      &\BBB{4}{2}{4}{2} + \BBB{2}{4}{2}{4} - \BBB{2}{2}{2}{2}
\\
\hline
\end{array}
$$
\end{example}

\begin{figure}[H]
\centering
\scalebox{0.3}{
\begin{pspicture}(6,4)
\psdots[dotscale=2]
(0,0)(0,1)(0,2)(0,3)(0,4)
(1,0)     (1,2)     (1,4)
(2,0)(2,1)(2,2)(2,3)(2,4)
(3,0)     (3,2)     (3,4)
(4,0)(4,1)(4,2)(4,3)(4,4)
\end{pspicture}}
=
\ 
\scalebox{0.3}{
\begin{pspicture}(5,4)
\psdots[dotscale=2]
(0,0)(0,2)(0,4)
(1,0)(1,2)(1,4)
(2,0)(2,2)(2,4)
(3,0)(3,2)(3,4)
(4,0)(4,2)(4,4)
\end{pspicture}}
+
\ 
\scalebox{0.3}{
\begin{pspicture}(5,4)
\psdots[dotscale=2]
(0,0)(0,1)(0,2)(0,3)(0,4)
(2,0)(2,1)(2,2)(2,3)(2,4)
(4,0)(4,1)(4,2)(4,3)(4,4)
\end{pspicture}}
-
\ 
\scalebox{0.3}{
\begin{pspicture}(5,4)
\psdots[dotscale=2]
(0,0)(0,2)(0,4)
(2,0)(2,2)(2,4)
(4,0)(4,2)(4,4)
\end{pspicture}}
\caption{$\BB_{[2,4],[2,4]} = P_{2}Q_{4} + P_{4}Q_{2} - P_{2}Q_{2}$ onto 
$\Pi_{4}\otimes \Pi_2 + \Pi_{2}\otimes \Pi_4$.  }
\label{f6a}
\end{figure}

\begin{figure}[H]
\includegraphics[scale=1]{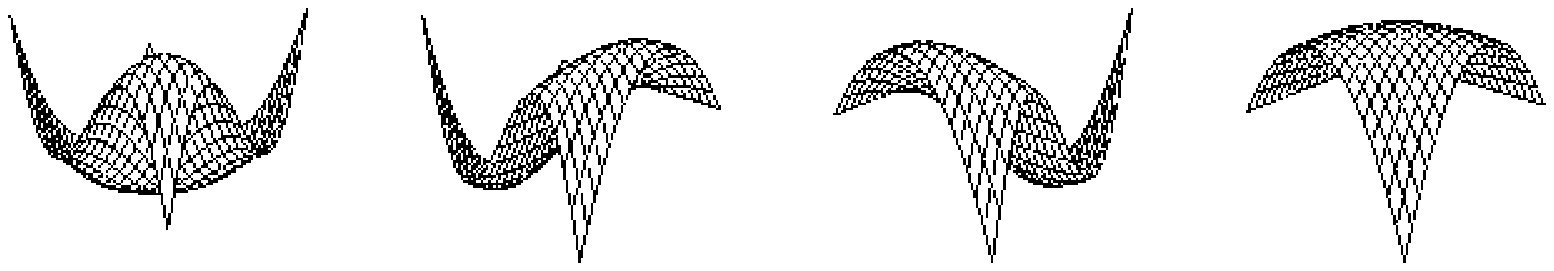}
\caption{$S = S_1+S_2-S_3$ for $\m=\n=[2,4]$.}
\label{f6b}
\end{figure}

\vskip 20pt
\begin{example}
In Figure \ref{f3}, we plot the dual basis functions $\Phi_{ij}$ and quasi-uniform grid
for the case $\m=\n=[1,2]$.
The approximating space is
$S_{\m,\n} = \Pi_1\otimes\Pi_2 + \Pi_2\otimes\Pi_1 - \Pi_1\otimes\Pi_1$.
The basis functions are plotted at the same position in the
grid corresponding to the indexing of the dual functionals.
\end{example}

\begin{figure}[H]
\begin{minipage}{.4 \textwidth}
\centering
\includegraphics[scale=0.4]{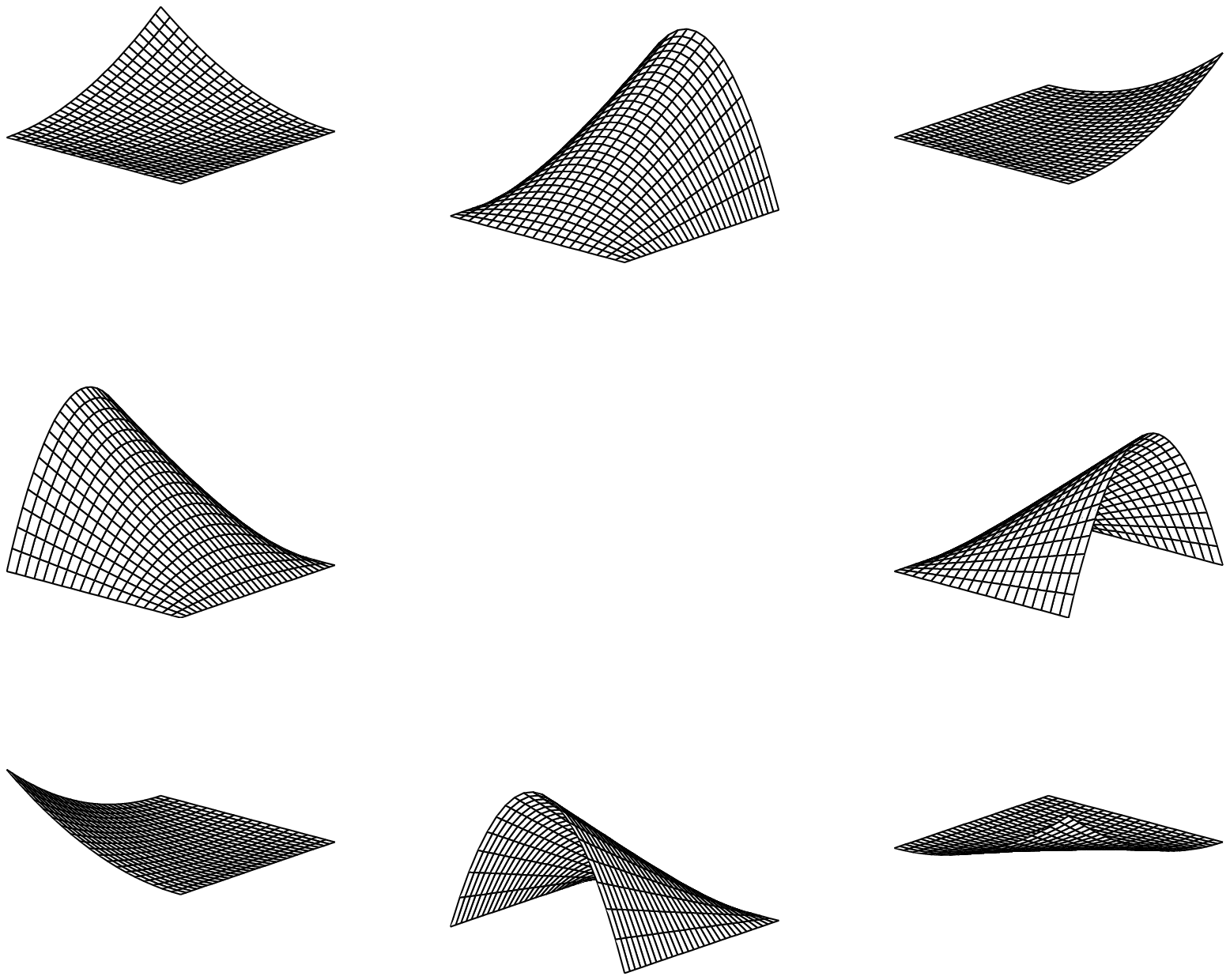}
\end{minipage}
\begin{minipage}{.4 \textwidth}
\centering
\scalebox{1}{
\begin{pspicture}(4,4)
\psdots[dotscale=2]
(0,0)(2,0)(4,0)
(0,2)     (4,2)
(0,4)(2,4)(4,4)
\end{pspicture}}
\end{minipage}
\caption{Basis functions and quasi-uniform grid for case $\m=\n=[1,2]$.}
\label{f3}
\end{figure}

\section{Approximation}

In this section we derive a rate of approximation for $\BB_{\m,\n}$
with $\m$ and $\n$ strictly increasing sequences in $\NN_0^{r+1}$
such that $m_k| m_{k+1}$ and $n_k | n_{k+1}$.
We begin with error estimates for the Boolean sum $\PP_{m_k} \oplus \QQ_{n_\ell}$.
for some $k$ and $\ell$.
Let
$$
d(F,\ran(\PP_{m_k} \oplus \QQ_{n_\ell}),[a,b]\times[c,d])
 := \min \{\norm{F-p}_{\infty,[a,b]\times[c,d]} : p \in \ran(\PP_{m_k} \oplus \QQ_{n_\ell})\}
$$
be the distance from a bivariate function $F$
to the range of the projector $\PP_{m_k} \oplus \QQ_{n_\ell}$
over the rectangle $[a,b]\times[c,d]$.

\begin{lemma}
Let $F$ be a continuous function on $[a,b] \times [c,d]$.
Then,
$$\norm{F-(\PP_{m_k} \oplus \QQ_{n_\ell})F}_{\infty,[a,b]\times[c,d]} 
\leq (1+\norm{\PP_{m_k}\oplus \QQ_{n_\ell}}) \
d(F,\ran(\PP_{m_k} \oplus \QQ_{n_\ell}),[a,b]\times[c,d])$$
\end{lemma}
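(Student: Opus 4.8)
The plan is to exploit the fact that $\PP_{m_k}\oplus\QQ_{n_\ell}$ is a bounded linear projector onto its range, and then apply the standard Lebesgue-type argument. Write $R := \PP_{m_k}\oplus\QQ_{n_\ell}$ and let $V := \ran(R)$. First I would recall from the preceding propositions that $R$ is a linear projector, hence $Rp = p$ for every $p \in V$. Then for any $p \in V$ I would estimate
$$
\norm{F - RF}_{\infty,[a,b]\times[c,d]}
= \norm{(F-p) - R(F-p)}_{\infty,[a,b]\times[c,d]}
\leq \norm{F-p}_{\infty,[a,b]\times[c,d]} + \norm{R(F-p)}_{\infty,[a,b]\times[c,d]},
$$
using $F - RF = (F-p) - R(F-p)$, which holds precisely because $Rp = p$.

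Next I would bound the second term using the operator norm: $\norm{R(F-p)}_{\infty,[a,b]\times[c,d]} \leq \norm{R}\,\norm{F-p}_{\infty,[a,b]\times[c,d]}$, where $\norm{R}$ denotes the operator norm of $R$ with respect to $\norm{\cdot}_{\infty,[a,b]\times[c,d]}$; this is finite by Theorem \ref{thm4}(2). Combining the two displays gives
$$
\norm{F - RF}_{\infty,[a,b]\times[c,d]}
\leq (1 + \norm{R})\,\norm{F-p}_{\infty,[a,b]\times[c,d]}
$$
for every $p \in V$. Since this holds for all $p \in V$, I would take the infimum over $p \in \ran(\PP_{m_k}\oplus\QQ_{n_\ell})$ on the right-hand side; the minimum is attained (the range is finite-dimensional, so the distance functional $d(F,\ran(R),[a,b]\times[c,d])$ is realized), yielding exactly the claimed inequality.

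There is really no serious obstacle here — this is the classical observation that the error of a bounded projector is controlled by the best approximation error from its range, with Lebesgue constant $1+\norm{R}$. The only point requiring a word of care is the identity $F - RF = (I-R)(F-p)$, which depends on $R$ being a projector with $V$ as a fixed-point set; that is guaranteed by the earlier proposition establishing that $\PP_{m_k}\oplus\QQ_{n_\ell}$ is a projector onto its range. One could also remark that $\norm{R}$ is finite by Theorem \ref{thm4}(2), so the bound is non-vacuous, though the statement as written does not require this.
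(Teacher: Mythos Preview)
Your proposal is correct and follows essentially the same approach as the paper: both use that $\PP_{m_k}\oplus\QQ_{n_\ell}$ fixes any $p$ in its range, apply the triangle inequality to $F-RF=(F-p)-R(F-p)$, bound the second term by the operator norm, and then take the infimum over $p$. The only cosmetic difference is that the paper inserts the vanishing middle term $\norm{p-Rp}=0$ explicitly, whereas you absorb it directly into the identity $(I-R)F=(I-R)(F-p)$.
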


\begin{proof}
Let $p \in \ran(\PP_{m_k} \oplus \QQ_{n_\ell})$.
Then,
\begin{align*}
\norm{F-(\PP_{m_k} \oplus \QQ_{n_\ell})F}_{\infty,[a,b]\times[c,d]} 
&\leq
\norm{F - p}_{\infty,[a,b]\times[c,d]} 
+ \underbrace{\norm{p - (\PP_{m_k} \oplus \QQ_{n_\ell})p}_{\infty,[a,b]\times[c,d]}}_0
+ \norm{(\PP_{m_k} \oplus \QQ_{n_\ell})(p-F)}_{\infty,[a,b]\times[c,d]}  \\
&\leq (1 + \norm{\PP_{m_k} \oplus \QQ_{n_\ell}}_{\infty,[a,b]\times[c,d]}) 
  \norm{F-p}_{\infty,[a,b]\times[c,d]}.
\end{align*}
Since true for all $p \in \Pi_{m_k} \times \Pi_{n_\ell}$, we have
$$
\norm{(F - \PP_{m_k}\oplus\QQ_{n_\ell}}_{\infty,[a,b]\times[c,d]} 
\leq
(1 + \norm{\PP_{m_k} \oplus \QQ_{n_\ell}}_{\infty,[a,b]\times[c,d]}) 
d(F,\ran(\PP_{m_k} \oplus \QQ_{n_\ell}),[a,b]\times[c,d]).
$$
\end{proof}

\begin{lemma}
Let $F\in C^{m_k+1,n_\ell+1}([a,b] \times [c,d])$ and suppose $b-a = d-c = h$.
Then,
$$
d(F,\ran(\PP_{m_k} \oplus \QQ_{n_\ell}),[a,b]\times[c,d]) \leq K_{m_k,n_\ell}(F) \ 
h^{m_k+n_\ell+2}
$$
with
$$
K_{m_k,n_\ell}(F) := \frac{\norm{F^{(m_k+1,n_\ell+1)}}_{\infty,[a,b] \times [c,d]}} 
  {2^{m_k+n_\ell+2}(m_k+1)!(n_\ell+1)!}.
$$
\end{lemma}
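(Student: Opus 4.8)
The plan is to avoid estimating the projector $\PP_{m_k}\oplus\QQ_{n_\ell}$ directly (its error necessarily drags in operator‑norm constants as in Theorem~\ref{thm4}) and instead to exhibit an explicit near‑best approximant in $\ran(\PP_{m_k}\oplus\QQ_{n_\ell})$ built from truncated Taylor expansions, whose error is a \emph{double} Taylor remainder that can be bounded with no projector constants at all. First I would record the structural fact that, since $\PP_{m_k}=P_{m_k}\times I$ and $\QQ_{n_\ell}=I\times Q_{n_\ell}$ act on disjoint variables they commute, and hence $\PP_{m_k}\oplus\QQ_{n_\ell}$ is a linear projector with $\ran(\PP_{m_k}\oplus\QQ_{n_\ell})=\ran\PP_{m_k}+\ran\QQ_{n_\ell}$ (for commuting projectors $P,Q$ one has $(P\oplus Q)(f+g)=f+g$ for $f\in\ran P$, $g\in\ran Q$, using $(I-P)Qf=Q(I-P)f=0$). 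Concretely, this range contains every continuous function on $[a,b]\times[c,d]$ whose $u$‑sections lie in $\Pi_{m_k}$, every one whose $v$‑sections lie in $\Pi_{n_\ell}$, and all their sums.

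Next I would fix the interval midpoints $u_0:=\tfrac{a+b}{2}$, $v_0:=\tfrac{c+d}{2}$ and introduce the partial Taylor operators
$$
T_u F(u,v):=\sum_{i=0}^{m_k}\frac{\partial^i F}{\partial u^i}(u_0,v)\,\frac{(u-u_0)^i}{i!},
\qquad
T_v F(u,v):=\sum_{j=0}^{n_\ell}\frac{\partial^j F}{\partial v^j}(u,v_0)\,\frac{(v-v_0)^j}{j!},
$$
which are well defined and continuous because $F\in C^{m_k+1,n_\ell+1}([a,b]\times[c,d])$. The $u$‑sections of $T_uF$ lie in $\Pi_{m_k}$, so $\PP_{m_k}(T_uF)=T_uF$; similarly $\QQ_{n_\ell}(T_vF)=T_vF$; and $T_uT_vF=T_vT_uF\in\Pi_{m_k}\otimes\Pi_{n_\ell}$ is fixed by both. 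Hence the blended Taylor approximant
$$
p:=T_uF+T_vF-T_uT_vF=\bigl(I-(I-T_u)(I-T_v)\bigr)F
$$
(the second equality using that $T_u$ and $T_v$ commute) lies in $\ran\PP_{m_k}+\ran\QQ_{n_\ell}=\ran(\PP_{m_k}\oplus\QQ_{n_\ell})$, and it satisfies $F-p=(I-T_u)(I-T_v)F$. Consequently $d(F,\ran(\PP_{m_k}\oplus\QQ_{n_\ell}),[a,b]\times[c,d])\le\norm{(I-T_u)(I-T_v)F}_{\infty,[a,b]\times[c,d]}$.

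The last step is to bound this double remainder by iterating the integral form of the Taylor remainder. Put $R_v:=(I-T_v)F$, so that
$$
R_v(u,v)=\frac{1}{n_\ell!}\int_{v_0}^{v}(v-s)^{n_\ell}\,\frac{\partial^{n_\ell+1}F}{\partial v^{n_\ell+1}}(u,s)\,{\rm d}s .
$$
Since $F\in C^{m_k+1,n_\ell+1}$ one may differentiate $m_k+1$ times in $u$ under the integral sign, giving $\partial_u^{m_k+1}R_v(u,v)=\frac{1}{n_\ell!}\int_{v_0}^{v}(v-s)^{n_\ell}F^{(m_k+1,n_\ell+1)}(u,s)\,{\rm d}s$, whence, using $|v-v_0|\le h/2$,
$$
\norm{\partial_u^{m_k+1}R_v}_{\infty,[a,b]\times[c,d]}\le\frac{(h/2)^{n_\ell+1}}{(n_\ell+1)!}\,\norm{F^{(m_k+1,n_\ell+1)}}_{\infty,[a,b]\times[c,d]} .
$$
Applying the degree‑$m_k$ Taylor remainder in $u$ to $R_v$ and using $|u-u_0|\le h/2$,
$$
\bigl|(I-T_u)R_v(u,v)\bigr|=\Bigl|\frac{1}{m_k!}\int_{u_0}^{u}(u-t)^{m_k}\,\partial_t^{m_k+1}R_v(t,v)\dt\Bigr|
\le\frac{(h/2)^{m_k+1}}{(m_k+1)!}\cdot\frac{(h/2)^{n_\ell+1}}{(n_\ell+1)!}\,\norm{F^{(m_k+1,n_\ell+1)}}_{\infty,[a,b]\times[c,d]},
$$
and the right‑hand side is exactly $K_{m_k,n_\ell}(F)\,h^{m_k+n_\ell+2}$. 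Since $F-p=(I-T_u)R_v$, this gives the claim. I expect the only genuinely delicate points to be (i) justifying the differentiation under the integral sign, which is covered by the joint continuity of $F^{(m_k+1,n_\ell+1)}$ on the compact rectangle, and (ii) the bookkeeping that makes the centres of expansion the midpoints of $[a,b]$ and $[c,d]$: expanding about the endpoints would replace each $h/2$ by $h$ and lose the factor $2^{m_k+n_\ell+2}$ appearing in the denominator of $K_{m_k,n_\ell}(F)$.
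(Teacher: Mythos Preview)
Your proposal is correct and follows essentially the same route as the paper: both proofs take as approximant the Boolean sum of the partial Taylor operators centred at the midpoints, identify $F-p$ with the product $(I-T_u)(I-T_v)F$ of the complementary Taylor projectors, and bound this double remainder using $|u-u_0|,|v-v_0|\le h/2$. The only notable difference is that you iterate the \emph{integral} form of the remainder (differentiating under the integral sign), whereas the paper writes the double remainder in Lagrange form as $F^{(m_k+1,n_\ell+1)}(\xi,\eta)\,\frac{(x-c_x)^{m_k+1}}{(m_k+1)!}\frac{(y-c_y)^{n_\ell+1}}{(n_\ell+1)!}$; your version is arguably the more careful of the two, since the existence of a single intermediate point $(\xi,\eta)$ for the iterated Lagrange remainder is not entirely obvious.
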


\begin{proof}
The univariate Taylor polynomials centered at 
$c_x := \frac{a+b}{2}$ and $c_y = \frac{c+d}{2}$ are, respectively,
\begin{align*}
T_{m_k}(x) &= \sum_{i=0}^{m_k} f^{(i)}(c_x) \frac{(x-c_x)^i}{i!} \\
T_{n_\ell}(y) &= \sum_{j=0}^{n_\ell} g^{(j)}(c_y) \frac{(y-c_y)^j}{j!},
\end{align*}
with remainder estimates
\begin{align*}
f(x) &= T_{m_k}(x) + f^{({m_k}+1)}(\xi) \frac{(x-c_x)^{{m_k}+1}}{({m_k}+1)!} \\
g(y) &= T_{n_\ell}(y) + g^{({n_\ell}+1)}(\eta)\frac{(y-c_y)^{{n_\ell}+1}}{({n_\ell}+1)!}.
\end{align*}
for some $\xi$ depending on $x$ that lies between $x$ and $c_x$,
and for some $\eta$ depending on $y$ that lies between $y$ and $c_y$.
Let $\TT_{m_k} = T_{m_k} \times I$ and $\TT_{n_\ell} = I \times T_{n_\ell}$ be bivariate extensions.
Then,
$$
\TT_{m_k} \oplus \TT_{n_\ell} = \TT_{m_k} + \TT_{n_\ell} - \TT_{n_\ell}\TT_{m_k}.
$$
Let $F \in C^{{m_k}+1,{n_\ell}+1}$.
Then
\begin{align*}
\TT_{m_k} \oplus \TT_{n_\ell} &= \sum_{i=0}^{m_k} F^{(i,0)}(c_x,c_y) \frac{(x-c_x)^i}{i!}
 +\sum_{j=0}^{n_\ell} F^{(0,j)}(c_x,c_y) \frac{(y-c_y)^j}{j!} 
 -\sum_{i=0}^{m_k} \sum_{j=0}^{n_\ell} F^{(i,j)}(c_x,c_y) \frac{(x-c_x)^i}{i!} \frac{(y-c_y)^j}{j!}.
\end{align*}
Note that 
$$\ran(\TT_{m_k} \oplus \TT_{n_\ell}) = 
 \Pi_{m_k} \times I + I \times \Pi_{n_\ell} + \Pi_{m_k} \times \Pi_{n_\ell}
= \ran(\PP_{m_k} \oplus \QQ_{n_\ell}).
$$
By the error in Boolean sum interpolation,
$$
F - \TT_{m_k} \oplus \TT_{n_\ell} F = \TT_{m_k}^c \TT_{n_\ell}^c F = (I - \TT_{m_k})(I-\TT^{n_\ell})F,
$$
with
$$
\TT_{m_k}^c \TT_{n_\ell}^c F 
   = F^{({m_k}+1,{n_\ell}+1)}(\xi,\eta) \frac{(x-c_x)^{{m_k}+1}}{({m_k}+1)!} \frac{(y-c_y)^{{n_\ell}+1}}{({n_\ell}+1)!}
$$
for some $\xi$ between $x$ and $c_x$,
and some $\eta$ between $y$ and $c_y$.
Hence,
\begin{align*}
d(F,\ran(\PP_{m_k} \oplus \QQ_{n_\ell}),[a,b]\times[c,d]) &\leq 
\norm{F(x,y)-\TT_{m_k} \oplus \TT_{n_\ell} F(x,y)}_{\infty,[a,b]\times[c,d]} \\
  &\leq \norm{F^{({m_k}+1,{n_\ell}+1)}}_{\infty,[a,b]\times[c,d]}
   \frac{h^{{m_k}+1}}{2^{{m_k}+1}({m_k}+1)} 
   \frac{h^{n_\ell+1}}{2^{{n_\ell}+1}({n_\ell}+1)!}.
\end{align*}
Note that the powers of $2$ in the denominator come from choosing
$c_x$ and $c_y$ at the centers of $[a,b]$ and $[c,d]$, respectively.
\end{proof}

As a corollary to the previous two lemmas, we have the following theorem:

\begin{theorem}
\label{thm1}
Let $F\in C^{{m_k}+1,{n_\ell}+1}([a,b] \times [c,d])$
with $b-a=d-c = h$.
Then,
$$\norm{F-(\PP_{m_k} \oplus \QQ_{n_\ell})F}_{\infty,[a,b]\times[c,d]} 
\leq K_{{m_k},{n_\ell}}(F) h^{{m_k}+{n_\ell}+2} 
$$
with
$$
K_{m,n}(F) := (1+\norm{\PP_{m}\oplus \QQ_{n}}) 
\frac{\norm{F^{(m+1,n+1)}}_{\infty,[a,b] \times [c,d]}} {2^{m+n+2}(m+1)!(n+1)!}.
$$
\end{theorem}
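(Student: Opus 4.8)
The statement is literally the composition of the two preceding lemmas, so the plan is simply to chain them. First I would invoke the first lemma of the section to get
\[
\norm{F-(\PP_{m_k} \oplus \QQ_{n_\ell})F}_{\infty,[a,b]\times[c,d]}
\leq (1+\norm{\PP_{m_k}\oplus \QQ_{n_\ell}})\;
d(F,\ran(\PP_{m_k} \oplus \QQ_{n_\ell}),[a,b]\times[c,d]).
\]
Then I would bound the distance term using the second lemma, which under the hypothesis $F\in C^{m_k+1,n_\ell+1}([a,b]\times[c,d])$ and $b-a=d-c=h$ gives
\[
d(F,\ran(\PP_{m_k} \oplus \QQ_{n_\ell}),[a,b]\times[c,d])
\leq \frac{\norm{F^{(m_k+1,n_\ell+1)}}_{\infty,[a,b]\times[c,d]}}{2^{m_k+n_\ell+2}(m_k+1)!(n_\ell+1)!}\; h^{m_k+n_\ell+2}.
\]
Multiplying these two inequalities yields exactly $\norm{F-(\PP_{m_k}\oplus\QQ_{n_\ell})F}_{\infty,[a,b]\times[c,d]}\leq K_{m_k,n_\ell}(F)\,h^{m_k+n_\ell+2}$ with $K_{m_k,n_\ell}(F)$ as defined in the statement.

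The only genuine content is verifying that the constant assembled by this chaining matches the displayed formula for $K_{m,n}(F)$, which it does term-by-term: the factor $(1+\norm{\PP_m\oplus\QQ_n})$ comes from the first lemma, and the quotient $\norm{F^{(m+1,n+1)}}_{\infty}/(2^{m+n+2}(m+1)!(n+1)!)$ is precisely $K_{m,n}(F)$ from the second lemma (the notational collision — the second lemma calls its constant $K_{m_k,n_\ell}(F)$ as well, without the operator-norm prefactor — is harmless once one reads off which object each symbol denotes).

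I do not expect any obstacle here; the theorem is explicitly flagged in the text as "a corollary to the previous two lemmas." The one place to be slightly careful is bookkeeping: making sure that the norm $\norm{\PP_{m_k}\oplus\QQ_{n_\ell}}$ appearing in the first lemma is the operator norm on $C([a,b]\times[c,d])$, which is finite by part (2) of Theorem~\ref{thm4}, so that the right-hand side is a well-defined finite quantity. Beyond that the proof is a two-line estimate.
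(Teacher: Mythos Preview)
Your proposal is correct and matches the paper's approach exactly: the paper gives no explicit proof for this theorem, stating only that it follows ``as a corollary to the previous two lemmas,'' which is precisely the two-step chaining you describe. Your remark about the notational collision on $K_{m_k,n_\ell}(F)$ between the second lemma and the theorem is also apt and worth noting.
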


\begin{prop}
(by Proposition 2 of section 1.4 from \cite{DS89})
Let $\PP_{m_k}^{c}F = F - \PP_{m_k}F$, $\QQ_{n_\ell}^{c}F = F - \QQ_{n_\ell} F$
and $\PP_{m_k}^c\QQ_{n_\ell}^c F = F - \PP_{m_k} \oplus \QQ_{n_\ell} F$.
Then,
\begin{equation}
\BB_{\m,\n}^c = \sum_{k=0}^{r+1} \PP_{m_{k-1}}^c\QQ_{n_{r-k}}^c - 
   \sum_{k=0}^{r} \PP_{m_k}^c\QQ_{n_{r-k}}^c,
\end{equation}
with $\PP_{m_{-1}} := \II$ and $\QQ_{n_{-1}} := \II$.
\end{prop}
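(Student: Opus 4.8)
The statement is Proposition~2 of Section~1.4 of \cite{DS89} specialized to the present operators, and I would either invoke it after checking the hypotheses or verify it directly; both routes are short. The hypotheses hold here because the univariate projectors satisfy $P_{m_i}P_{m_j}=P_{m_{\min(i,j)}}$ and $Q_{n_i}Q_{n_j}=Q_{n_{\min(i,j)}}$ (shown above), so the bivariate extensions $\PP_{m_k}=P_{m_k}\times I$ and $\QQ_{n_k}=I\times Q_{n_k}$ inherit the same nesting, and each $\PP_{m_k}$ commutes with each $\QQ_{n_\ell}$ since one acts only on the first coordinate and the other only on the second. The mechanism behind the formula is the elementary identity $(X\oplus Y)^c=X^cY^c$, valid because $(\II-X)(\II-Y)=\II-(X+Y-XY)$: applied to the iterated Boolean sum $\BB_{\m,\n}=\bigoplus_{k=0}^{r}\PP_{m_k}\QQ_{n_{r-k}}$ it gives $\BB_{\m,\n}^c=\prod_{k=0}^{r}\bigl(\II-\PP_{m_k}\QQ_{n_{r-k}}\bigr)$, and expanding this product, collapsing the $\PP$- and $\QQ$-subproducts with the nesting relations, and simplifying by an inclusion--exclusion count over index subsets leaves exactly the asserted alternating sum.

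A shorter self-contained route, which sidesteps the combinatorics, is to start from the reduced form $\BB_{\m,\n}=\sum_{k=0}^{r}\PP_{m_k}\QQ_{n_{r-k}}-\sum_{k=0}^{r-1}\PP_{m_k}\QQ_{n_{r-k-1}}$ proved above. Since $\BB_{\m,\n}^c=\II-\BB_{\m,\n}$,
\[
\BB_{\m,\n}^c=\II-\sum_{k=0}^{r}\PP_{m_k}\QQ_{n_{r-k}}+\sum_{k=0}^{r-1}\PP_{m_k}\QQ_{n_{r-k-1}}.
\]
I would then expand the right-hand side of the claimed identity using $\PP_{m_j}^c\QQ_{n_\ell}^c=(\II-\PP_{m_j})(\II-\QQ_{n_\ell})=\II-\PP_{m_j}-\QQ_{n_\ell}+\PP_{m_j}\QQ_{n_\ell}$, reindex $j=k-1$ in the first sum, and use the boundary convention so that its two extreme summands reduce to $\QQ_{n_r}^c$ and $\PP_{m_r}^c$ (i.e.\ $\PP_{m_{-1}}^c=\QQ_{n_{-1}}^c=\II$). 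Collecting terms by type, the scalar part telescopes to $\II$, the coefficient of every single projector $\PP_{m_k}$ and of every $\QQ_{n_\ell}$ telescopes to $0$, and the bilinear terms $\PP_{m_k}\QQ_{n_\ell}$ reassemble exactly into $-\bigl(\sum_{k=0}^{r}\PP_{m_k}\QQ_{n_{r-k}}-\sum_{k=0}^{r-1}\PP_{m_k}\QQ_{n_{r-k-1}}\bigr)$; comparison with the display above finishes the proof.

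I do not expect a genuine obstacle: once the reduced form of $\BB_{\m,\n}$ is in hand, the rest is bookkeeping, and the only place calling for care is the ends of the index ranges --- the $(-1)$-indexed factors and the signs in the telescoping. It is worth saying why this particular rewriting is the useful one: each summand $\PP_{m_j}^c\QQ_{n_\ell}^c$ vanishes on $\ran(\PP_{m_j}\oplus\QQ_{n_\ell})$, hence is already small on smooth $F$ by the preceding lemmas, so the identity displays the global error $\BB_{\m,\n}^c$ as an alternating sum of individually controlled pieces --- precisely the form needed for the order-of-approximation estimates that follow.
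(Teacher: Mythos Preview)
Your proposal is correct and in fact supplies more than the paper does: the paper gives no proof of this proposition at all, simply citing Proposition~2 of \S1.4 in \cite{DS89}. Both of your routes are valid --- the first (product formula $\BB_{\m,\n}^c=\prod_k(\II-\PP_{m_k}\QQ_{n_{r-k}})$ followed by collapsing via the nesting relations) is the standard argument in \cite{DS89}, while your second route (expand the reduced form of $\BB_{\m,\n}$ and match the telescoped right-hand side termwise) is a clean self-contained verification that avoids the inclusion--exclusion bookkeeping.

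One point worth flagging explicitly: you write the boundary convention as $\PP_{m_{-1}}^c=\QQ_{n_{-1}}^c=\II$, which is indeed what makes the identity true and is consistent with the paper's later convention $m_{-1}=n_{-1}=-1$ in Theorem~\ref{thm2} (so that $\PP_{m_{-1}}$ projects onto $\Pi_{-1}=\{0\}$, i.e.\ $\PP_{m_{-1}}=0$). The proposition as stated, however, literally says ``$\PP_{m_{-1}}:=\II$'', which would force $\PP_{m_{-1}}^c=0$ and kill the two boundary summands; with that reading the formula fails (the $\II$ term never appears on the right). You have silently corrected what is evidently a typo --- it would be worth saying so in your write-up.
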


From this and the previous theorem, we arrive at our main result of this section.

\begin{theorem}
\label{thm2}
Let $F\in C^{m_r+1,n_r+1}([a,b] \times [c,d])$ with $b-a=d-c = h$.
Then,
$$
\norm{F - \BB_{\m,\n}F}_{\infty,[a,b]\times[c,d]}  \leq C_{\m,\n}(F)\ h^{p}
$$
with 
$$
p = \min\{m_{k-1} + n_{r-k} +2 : \ k=0, \ldots, r+1\}, 
$$
$m_{-1} := -1 =: n_{-1}$,
with respect to the constant
$$
C_{\m,\n}(F) = \sum_{k=0}^{r+1} K_{m_{k-1},n_{r-k}}(F)+
  \sum_{k=0}^{r} K_{m_{k},n_{r-k}}(F),
$$
with
$$
K_{m_k,n_\ell}(F) := 
(1+\norm{\PP_{m_k}\oplus \QQ_{n_\ell}})  \
\frac{\norm{F^{(m_k+1,n_\ell+1)}}_{\infty,[a,b] \times [c,d]}} 
  {2^{m_k+n_\ell+2}(m_k+1)!(n_\ell+1)!},
$$
and 
$$
\norm{\PP_{m_k}\oplus \QQ_{n_\ell}}  \leq
 C_{m_r,m_k} + C_{n_r,n_\ell} + C_{m_r,m_k} C_{n_r,n_\ell},
$$
for constants $C_{m_r,m_\ell}$ and $C_{n_r,n_\ell}$ 
defined in the proofs of Theorems \ref{thm3} and \ref{thm4}
depending only on $m_r$, $m_\ell$, $n_r$ and $n_\ell$.
\end{theorem}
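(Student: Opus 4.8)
The plan is to read the statement as a corollary of the error identity in the preceding Proposition together with the per-rectangle estimate of Theorem~\ref{thm1}. First I would write $F-\BB_{\m,\n}F=\BB_{\m,\n}^c F$ and substitute
$$
\BB_{\m,\n}^c = \sum_{k=0}^{r+1} \PP_{m_{k-1}}^c\QQ_{n_{r-k}}^c - \sum_{k=0}^{r} \PP_{m_k}^c\QQ_{n_{r-k}}^c ,
$$
then apply the triangle inequality to reduce to bounding each of the $2r+3$ terms $\norm{\PP_{m_k}^c\QQ_{n_\ell}^c F}_{\infty,[a,b]\times[c,d]}$. Since by definition $\PP_{m_k}^c\QQ_{n_\ell}^c F = F-(\PP_{m_k}\oplus\QQ_{n_\ell})F$, and since $F\in C^{m_r+1,n_r+1}\subseteq C^{m_k+1,n_\ell+1}$ for every pair $(m_k,n_\ell)$ that occurs, Theorem~\ref{thm1} gives $\norm{\PP_{m_k}^c\QQ_{n_\ell}^c F}_{\infty,[a,b]\times[c,d]}\le K_{m_k,n_\ell}(F)\,h^{m_k+n_\ell+2}$ with exactly the stated $K$; the factor $\norm{\PP_{m_k}\oplus\QQ_{n_\ell}}$ inside $K$ is controlled by Theorem~\ref{thm4}(2). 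Summing over the two families of indices yields
$$
\norm{F-\BB_{\m,\n}F}_{\infty}\le \sum_{k=0}^{r+1} K_{m_{k-1},n_{r-k}}(F)\,h^{m_{k-1}+n_{r-k}+2} + \sum_{k=0}^{r} K_{m_k,n_{r-k}}(F)\,h^{m_k+n_{r-k}+2}.
$$

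Next I would do the bookkeeping showing every exponent above is at least $p$. The exponents of the first sum are precisely the numbers $m_{k-1}+n_{r-k}+2$, $k=0,\dots,r+1$, over which $p$ is the minimum, so these are $\ge p$ trivially. For the second sum, the exponent $m_k+n_{r-k}+2$ is compared with $m_k+n_{r-k-1}+2$, which (taking the index $k+1$) is among the quantities defining $p$; since $\n$ is strictly increasing, $n_{r-k-1}<n_{r-k}$, hence $p\le m_k+n_{r-k-1}+2< m_k+n_{r-k}+2$, and the bound $\ge p$ holds (this remains valid at $k=r$, where $n_{r-k-1}=n_{-1}=-1$). Factoring $h^p$ out of the inequality — with $h\le1$, so that the remaining nonnegative powers of $h$ are $\le1$ — collapses the two sums into the single constant $C_{\m,\n}(F)$, which is exactly the claimed bound.

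The only spots that require care are the boundary terms and the consistency of the stated constant. For $k=0$ and $k=r+1$ in the first sum one factor is $\PP_{m_{-1}}^c=\II-\II=0$ or $\QQ_{n_{-1}}^c=0$, so those terms vanish identically; the inequality $0\le K_{m_{-1},n_{r-k}}(F)\,h^{m_{-1}+n_{r-k}+2}$ still makes sense under the convention $m_{-1}=n_{-1}=-1$, since $(m_{-1}+1)!=0!=1$, the derivative $F^{(0,n_r+1)}$ (resp.\ $F^{(m_r+1,0)}$) exists, and $\norm{\II\oplus\QQ_{n_r}}=\norm{\II}=1$, so each such $K$ is a finite constant. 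Thus retaining these (zero) terms in $C_{\m,\n}(F)$ weakens the estimate only harmlessly while keeping the formula uniform in $k$.

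I do not expect a genuine obstacle here: the whole argument is a triangle-inequality assembly of results already established. If anything, the delicate point is simply verifying that the $p$ defined from the first sum really bounds below the exponents coming from the second sum as well — which, as noted, reduces to the strict monotonicity of $\n$ assumed throughout this section.
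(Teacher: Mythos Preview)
Your proposal is correct and follows essentially the same route as the paper: decompose $\BB_{\m,\n}^c$ via the Proposition, bound each $\PP^c\QQ^c$ term by Theorem~\ref{thm1}, then observe that the exponents from the second sum dominate those from the first (the paper uses strict monotonicity of $\m$ to compare $m_{k-1}+n_{r-k}<m_k+n_{r-k}$, while you equivalently use strict monotonicity of $\n$). Your treatment is in fact slightly more careful than the paper's in two places: you make explicit the assumption $h\le 1$ needed to factor out $h^p$, and you note that the boundary terms $k=0,\,r{+}1$ in the first sum actually vanish.
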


\begin{proof}
By Theorem \ref{thm1},
\begin{align}
\norm{F - \BB_{\m,\n} F}_{\infty,[a,b] \times [c,d]} &\leq 
\sum_{k=0}^{r+1} \norm{\PP_{m_{k-1}}^c\QQ_{n_{r-k}}^cF}_{\infty,[a,b] \times [c,d]} 
  +\sum_{k=0}^{r} \norm{\PP_{m_k}^c\QQ_{n_{r-k}}^c F}_{\infty,[a,b] \times [c,d]} \\
&\leq \sum_{k=0}^{r+1} K_{m_{k-1},n_{r-k}}(F) h^{m_{k-1}+n_{r-k}+2} 
  + \sum_{k=0}^{r} K_{m_{k},n_{r-k}}(F) h^{m_{k}+n_{r-k}+2}.
\end{align}
Since $\m$ is increasing, 
$$ 
m_{k-1}+n_{r-k}  < m_{k}+n_{r-k},
$$
and so the lowest power that occurs is
$$
p = \min\{m_{k-1}+n_{r-k}+2\}.
$$
Since we require, when $k=0$, that $m_{-1} + n_r+2 = n_r+1$,
then $m_{-1} := -1$.
Likewise, $n_{-1} := -1$.
Therefore,
\begin{align}
\norm{F - \BB_{\m,\n} F}_{\infty,[a,b] \times [c,d]} &\leq 
   \Big(\sum_{k=0}^{r+1} K_{m_{k-1},n_{r-k}}(F) 
  + \sum_{k=0}^{r} K_{m_{k},n_{r-k}}(F) \Big) \ h^{p}.
\end{align}
The bounds on $\norm{\PP_{m_k}\oplus \QQ_{n_\ell}}$ come from Theorem \ref{thm4}.
\end{proof}

In the case $r=0$, $\m = [m]$ and $\n=[n]$ each contain just one number.
Then, we have the following special case of Theorem \ref{thm2}.

\begin{cor}
Assume that $\m = [m]$ and $\n=[n]$ for $r=0$.
Then $\BB_{\m,\n}F$ is a tensor product approximant to $F$ of approximation order
$p =  \min\{n+1, m+1\}$.
\label{cor1}
\end{cor}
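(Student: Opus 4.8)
The plan is to specialize Theorem \ref{thm2} to the degenerate case $r=0$, where the sequences $\m=[m]$ and $\n=[n]$ each have a single entry $m_0=m$ and $n_0=n$. First I would observe that with $r=0$ the space $S_{\m,\n}$ from (\ref{e1}) collapses to the single tensor product space $\Pi_{m_0}\otimes\Pi_{n_0}=\Pi_m\otimes\Pi_n$, so the blended quasi-interpolant $\BB_{\m,\n}$ is genuinely a tensor product approximant; this justifies the wording of the statement. Likewise, the Boolean sum $\BB_{\m,\n}=\bigoplus_{k=0}^0 \PP_{m_k}\QQ_{n_{r-k}}$ reduces to the single term $\PP_{m_0}\oplus\QQ_{n_0}=\PP_m\oplus\QQ_n$.

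Next I would compute the exponent $p$ directly from the formula $p=\min\{m_{k-1}+n_{r-k}+2:k=0,\ldots,r+1\}$ with the conventions $m_{-1}:=-1=:n_{-1}$. With $r=0$ the index $k$ ranges over $\{0,1\}$. For $k=0$ we get $m_{-1}+n_0+2=-1+n+2=n+1$. For $k=1$ we get $m_0+n_{-1}+2=m+(-1)+2=m+1$. Hence $p=\min\{n+1,\,m+1\}$, exactly as claimed. I would also note that the constant $C_{\m,\n}(F)$ from Theorem \ref{thm2} reduces to the sum of the two relevant $K$-terms, namely $K_{m_{-1},n_0}(F)+K_{m_0,n_0}(F)$ (the first coming from the $k=0,1$ range in the $\sum_{k=0}^{r+1}$ sum after accounting for the degenerate $m_{-1}$, the second from the single $k=0$ term in the $\sum_{k=0}^{r}$ sum), so the error bound $\norm{F-\BB_{\m,\n}F}_{\infty,[a,b]\times[c,d]}\leq C_{\m,\n}(F)\,h^{p}$ with $p=\min\{n+1,m+1\}$ follows immediately.

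I do not anticipate a genuine obstacle here, since this is a bookkeeping specialization of an already-proven theorem; the only point requiring a little care is tracking the boundary conventions $m_{-1}=n_{-1}=-1$ through the index ranges so that the two candidate exponents $n+1$ and $m+1$ are correctly identified and the empty or degenerate summation terms are handled consistently with Proposition (the one invoking Proposition 2 of section 1.4 from \cite{DS89}). Once that is checked, the minimum of the two exponents gives the stated approximation order $p=\min\{n+1,m+1\}$, completing the proof.
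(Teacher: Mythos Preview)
Your approach is essentially the same as the paper's: specialize Theorem \ref{thm2} to $r=0$ and read off $p=\min\{n+1,m+1\}$ from the two index values $k=0,1$. The exponent computation is correct and matches the paper line for line.

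There is one slip worth correcting. You write that $\BB_{\m,\n}=\bigoplus_{k=0}^0 \PP_{m_k}\QQ_{n_{r-k}}$ ``reduces to the single term $\PP_{m_0}\oplus\QQ_{n_0}=\PP_m\oplus\QQ_n$.'' A one-term Boolean sum is just that term, so $\BB_{[m],[n]}=\PP_{m_0}\QQ_{n_0}=\PP_m\QQ_n$, the tensor product projector, not the Boolean sum $\PP_m\oplus\QQ_n=\PP_m+\QQ_n-\PP_m\QQ_n$. The paper states this correctly, and it is exactly what justifies calling $\BB_{\m,\n}F$ a ``tensor product approximant.'' The range of $\PP_m\oplus\QQ_n$ is not $\Pi_m\otimes\Pi_n$, so your version would not support that part of the statement. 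This misidentification does not affect your computation of $p$, but you should fix it so the first half of the corollary is properly argued.
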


\begin{proof}
In this case, $\BB_{\m,\n}$ reduces to the tensor product $\PP_{m}\QQ_{n}$
of $\PP_{m}$ and $\QQ_{n}$.
By Theorem \ref{thm2}, $p =  \min\{-1+n+2, m+(-1)+2\} = \min\{n+1, m+1\}$.
\end{proof}

\begin{example}
Suppose that $\m = [3,6,12]$ and $\n = [2,4,8]$.
Then, we rewrite as $[-1,3,6,12]$ and $[-1,2,4,8]$,
and the approximation order is
$$
p = 2 + \min\{-1+8, 3+4, 6+2, 12-1\}+2 =9.
$$
The full tensor product approximation with $\m=[12]$ and $\n=[8]$
has the same rate of approximation, i.e., 
$p =  \min\{n+1, m+1\} =  \min\{13,9\} = 9$.
Hence, we achieve the same rate of approximation for
$\BB_{[3,6,12],[2,4,8]}$ as by $\BB_{[12],[8]}$,
but with much fewer grid points.
This situation is discussed further in the next section.
\end{example}

\section{Serendipity elements}

The Serendipity elements are a class of finite elements in finite element
analysis that achieve a rate of approximation  better than one would expect.
These correspond to configurations that achieve the same order of approximation
as tensor product approximants, but with mainly boundary data 
(i.e., fewer interior points in the grid).
For our quasi-interpolants, we have an analogous situation.

Note that $\BB_{\m,\n}$ reduces to tensor product approximation when
$\m$ and $\n$ consist of just one number, i.e., when $r=0$.
By Corollary \ref{cor1}, the rate of approximation in tensor product approximation is
$p = \min\{n+1, m+1\}$.
In this case, $r=0$ and $\m = [m]$ and $\n = [n]$ for some $m$ and $n$.
The Serendipity elements are those that
achieve this rate of approximation on quasi-uniform grids.
In Tbl. \ref{f4a}, the rates of approximation are given for the
Serendipity elements corresponding to $\m = \n = [1,2]$, $[1,3]$, $[1,2,4]$, $[2,4]$.
They are $O(h^p)$ with $p = 3$, $4$, $5$ and $5$.
In each case, $S_{\m,\n}$ embeds in the tensor product space $\PP_{m_r} \PP_{n_r}$,
which  produces the same rates of approximation.
In Fig. \ref{f4b}, the quasi-uniform grids $G_{\m,\n}$ are plotted for these
cases.

\begin{table}[H]
\begin{center}
\begin{tabular}{|l|l|l|l|l|}
\hline
$\m$  & $\n$ & $S_{\m,\n}$ & $\dim(S_{\m,\n})$ & Approximation Order $p$,  $O(h^p)$ \\
\hline
$[1,2]$  & $[1,2]$ 
& $\Pi_1 \otimes \Pi_2 + \Pi_2 \otimes \Pi_1$
& $8$ & $2 + \min\{-1+2, 1+1, 2-1\} = 3$ \\
\hline
$[1,3]$ & $[1,3]$ 
& $\Pi_1 \otimes \Pi_3 + \Pi_3 \otimes \Pi_1$
& $12$ & $2 + \min\{-1+3, 1+1, 3-1\} = 4$ \\
\hline
$[1,2,4]$ & $[1,2,4]$ 
& $\Pi_1 \otimes \Pi_4 + \Pi_2 \otimes \Pi_2 + \Pi_4 \otimes \Pi_1$
& $17$ & $2 + \min\{-1+4, 1+2, 2+1 4-1\} = 5$ \\
\hline
$[2,4]$ & $[2,4]$ 
& $\Pi_2 \otimes \Pi_4 + \Pi_4 \otimes \Pi_2$
& $21$ & $2 + \min\{-1+4, 2+2, 4-1\} = 5$ \\
\hline
\end{tabular}
\end{center}
\caption{Approximation order for $\m=\n = [1,2]$, $[1,3]$, $[1,2,4]$, $[2,4]$.}
\label{f4a}
\end{table}

\begin{figure}[H]
\scalebox{.7}{
\begin{pspicture}(4,3)
\psdots[dotscale=1]
(0,0)(1,0)(2,0)
(0,1)     (2,1)
(0,2)(1,2)(2,2)
\end{pspicture}}
\scalebox{.7}{
\begin{pspicture}(5,4)
\psdots[dotscale=1]
(0,0)(1,0)(2,0)(3,0)
(0,1)(3,1)
(0,2)(3,2)
(0,3)(1,3)(2,3)(3,3)
\end{pspicture}}
\scalebox{.7}{
\begin{pspicture}(6,5)
\psdots[dotscale=1]
(0,0)(1,0)(2,0)(3,0)(4,0)
(0,1) (4,1)
(0,2)(2,2)(4,2)
(0,3)(4,3)
(0,4)(1,4)(2,4)(3,4)(4,4)
\end{pspicture}}
\scalebox{.7}{
\begin{pspicture}(5,5)
\psdots[dotscale=1]
(0,0)(1,0)(2,0)(3,0)(4,0)
(0,1)(2,1)(4,1)
(0,2)(1,2)(2,2)(3,2)(4,2)
(0,3)(2,3)(4,3)
(0,4)(1,4)(2,4)(3,4)(4,4)
\end{pspicture}}
\caption{Serendipity Elements for $\m=\n= [1,2]$, $[1,3]$, $[1,2,4]$, $[2,4]$.}
\label{f4b}
\end{figure}
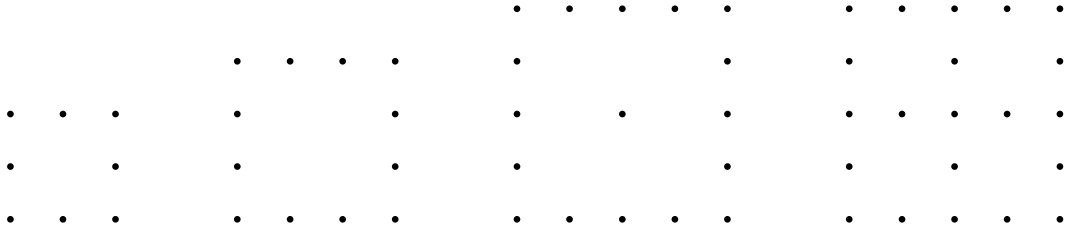


\section{Additional Examples}

In the following example we apply our construction to a well-known test function.
In Fig. \ref{f5a}, the tensor product Bernstein polynomial 
that approximates the function and the uniform grid is plotted for the case $\m=\n = 20$.
In Fig. \ref{f5b}, the discrete blended polynomial approximant for $\m = \n = [5,20]$
is plotted along with the corresponding quasi-uniform grid.
In the plots, the dots are control points.
In both cases the boundary curves are B\'ezier curves.
More importantly, with much less data, the discretely blended surface
does a very good job of approximating the full tensor product,
even though the approximation orders are different in this case 
($21$ for the tensor product compared to $12$ for the discretely blended surface).

\begin{figure}[H]
\includegraphics[scale=0.4]{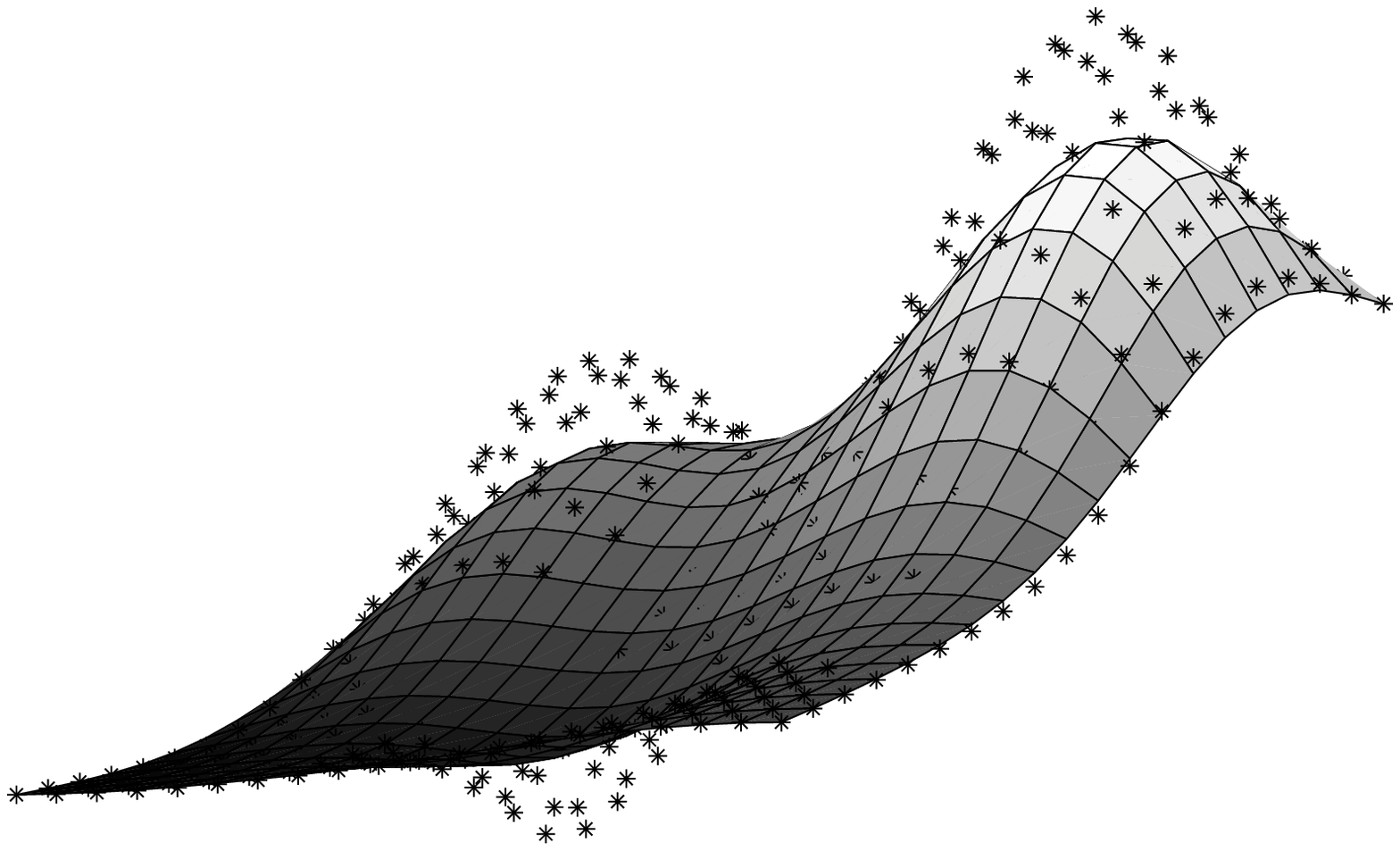}
\qquad
\includegraphics[scale=0.4]{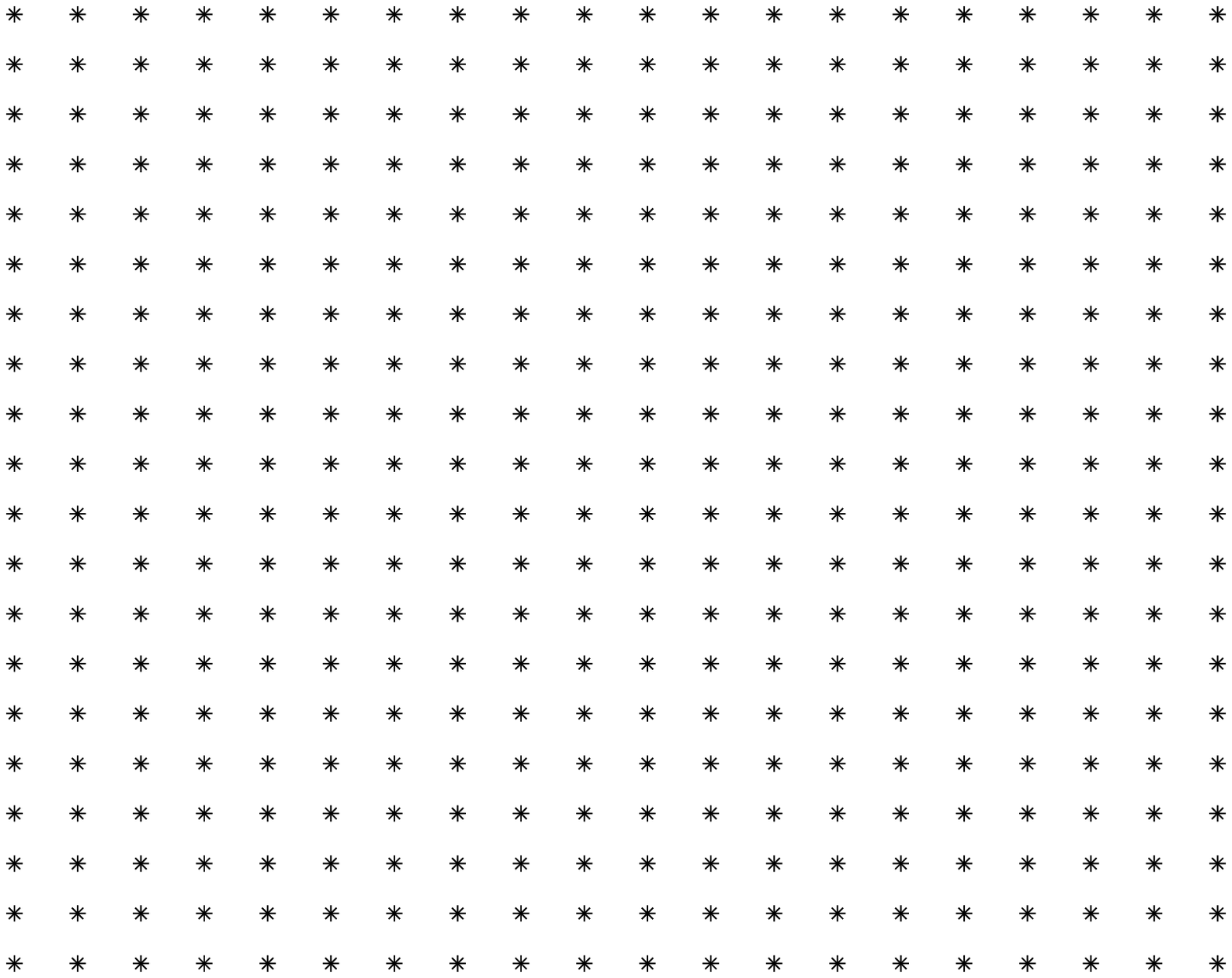}
\caption{Tensor Product Bernstein/B\'ezier Surface: $\m=\n=[20]$.}
\label{f5a}
\end{figure}

\begin{figure}[H]
\includegraphics[scale=0.4]{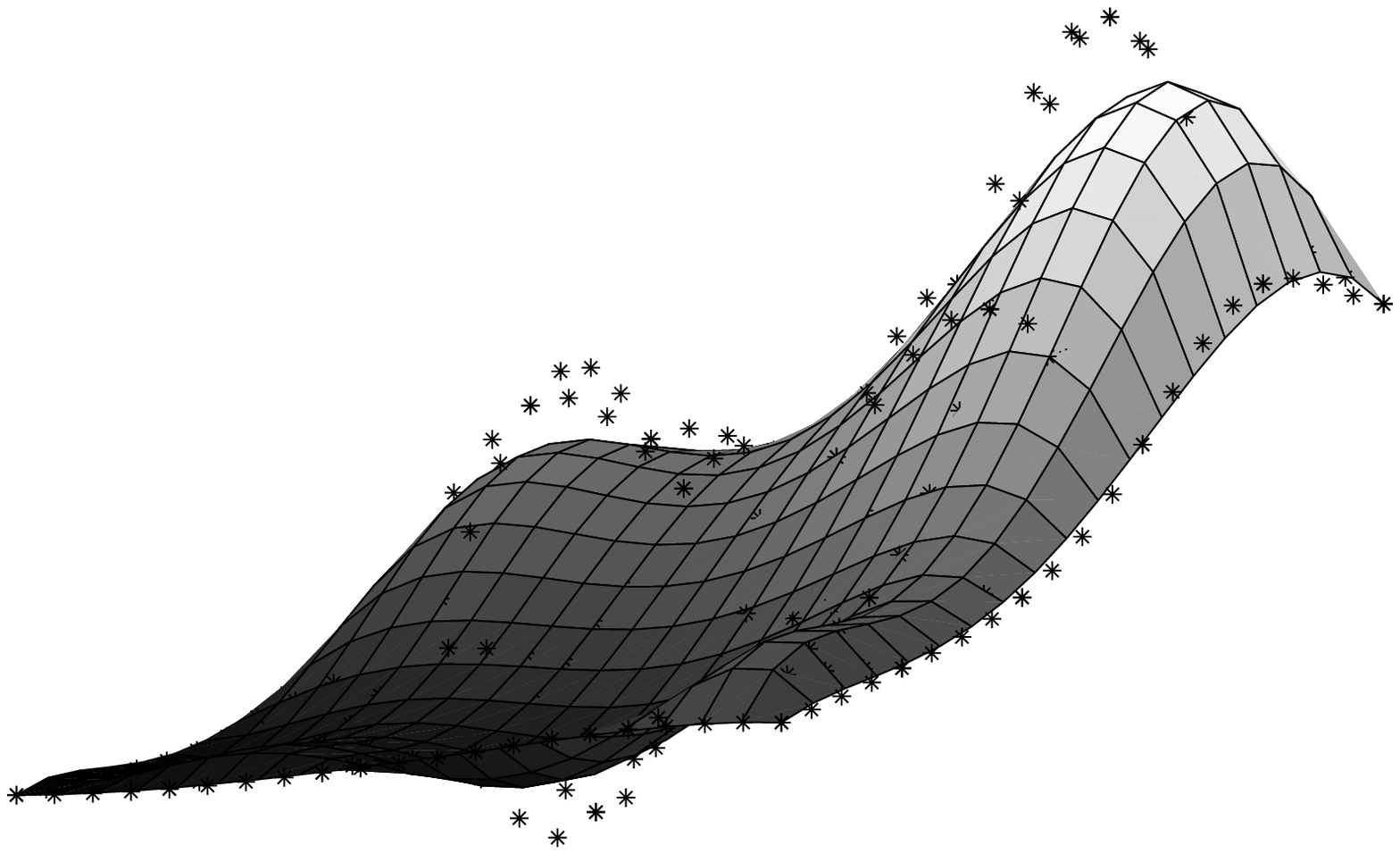}
\qquad
\includegraphics[scale=0.4]{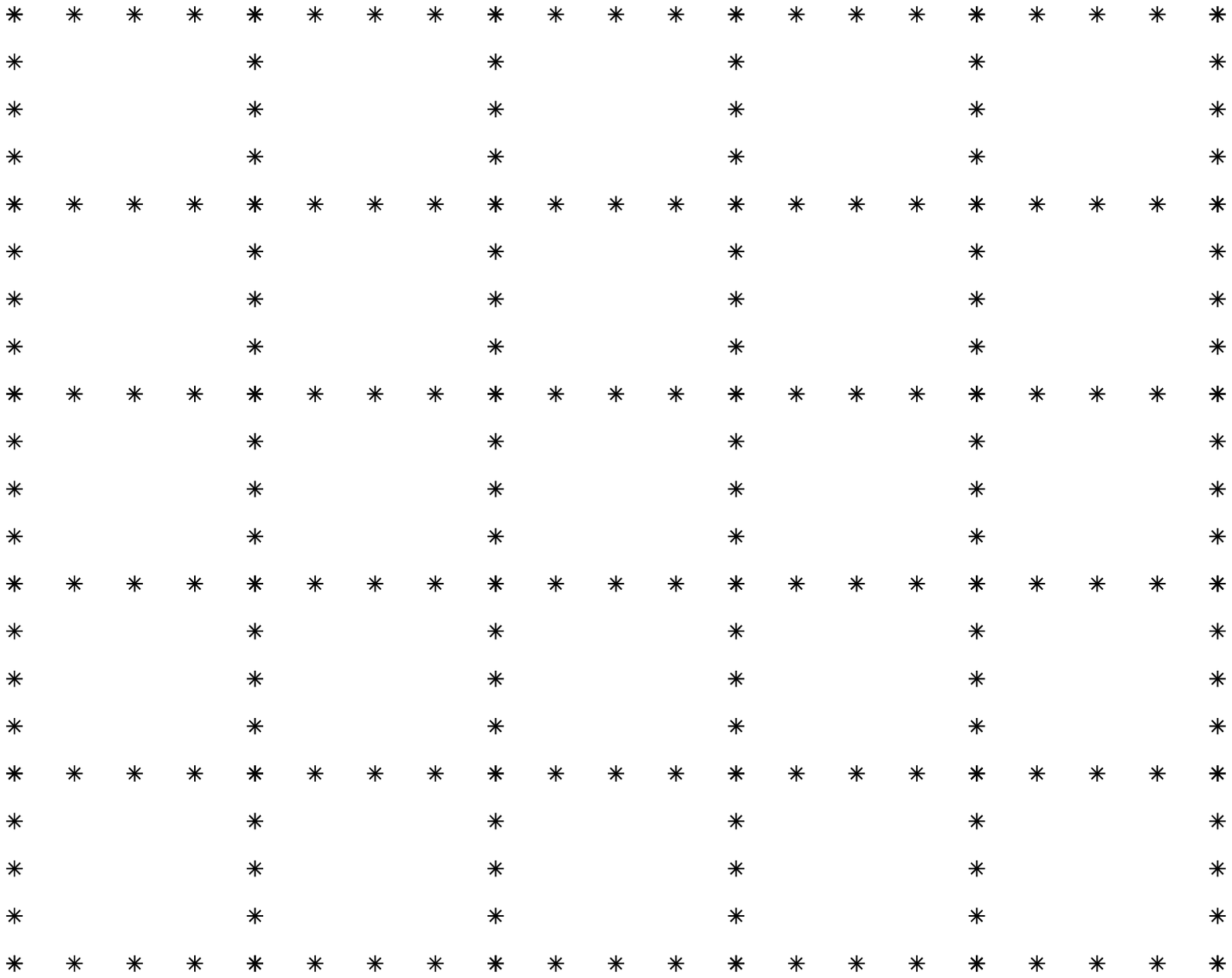}
\caption{Discretely Blended Surface: $\m=\n=[5,20]$.}
\label{f5b}
\end{figure}

In the next example we verify the rates of approximation for a 
piecewise discrete blended polynomial approximation of the function $f(x,y) = \sin(2xy)$.
This is plotted in Figure \ref{f6a} for the serendipity configuration $\m=\n=[2,4]$
for both \ref{f6a}, $4 \times 4$ and $16 \times 16$ piecewise polynomial grids.
Hence, for the second grid, the spacing $h$ is one quarter of the first, 
and we expect a much better rate of approximation.

\begin{figure}[H]
\centering
\includegraphics[width=3in]{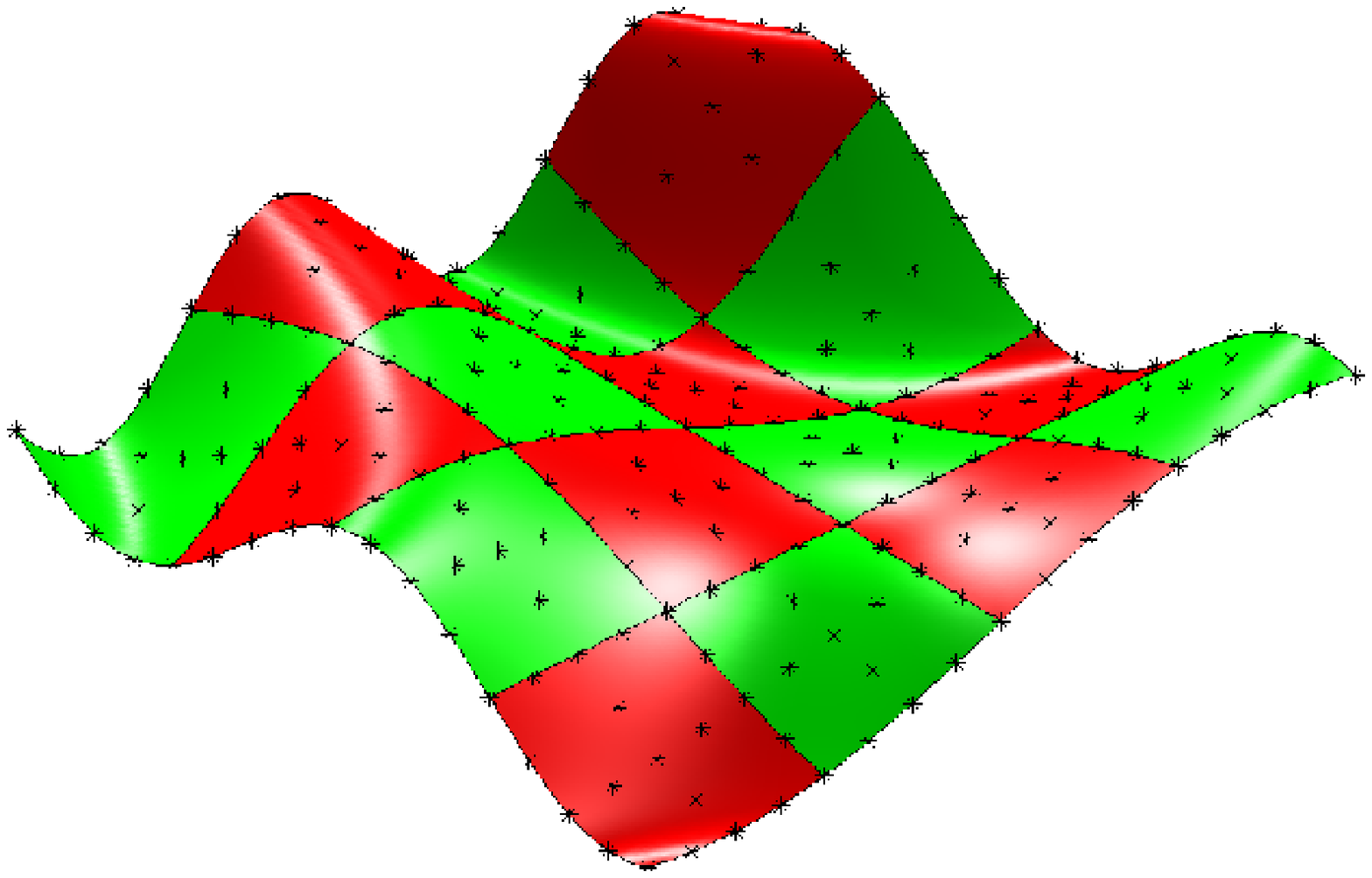}
\qquad
\includegraphics[width=3in]{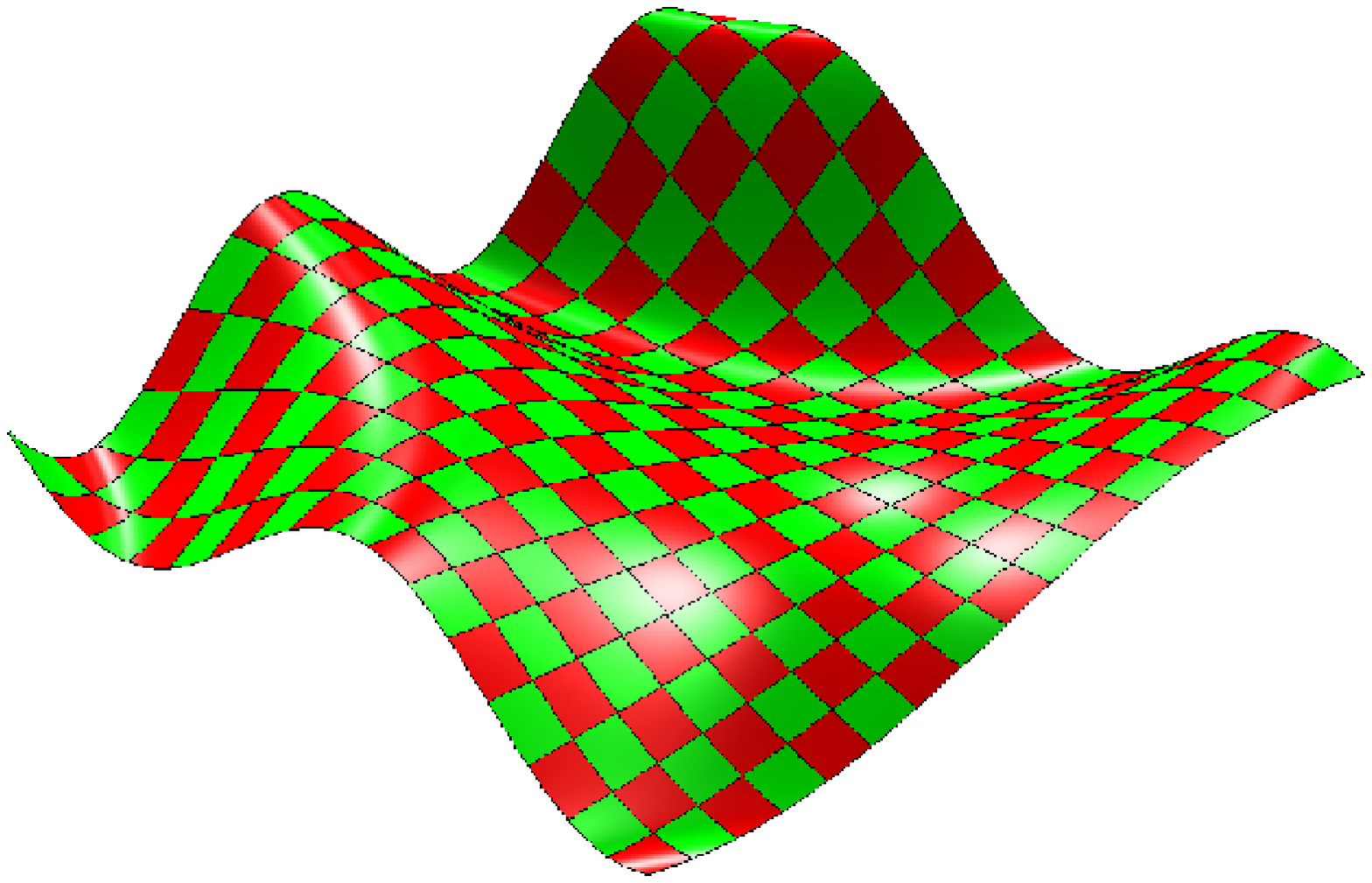}
\caption{$f(x,y) = \sin(2xy)$ with $\m=\n=[2,4]$: $4 \times 4$ and $16 \times 16$ Grids.}
\label{6a}
\end{figure}

The errors in approximation in this example for $k \times k$ grid approximations
for $k=1, \ldots, 16$ is tabulated in Table \ref{f6b} for the serendipity
configuration $\m=\n=[2,4]$ and for the tensor product $\m=\n=[4]$.
By the theory, these should achieve the same rate of approximation $h^5$.
The data confirms that we are close to this number.
The exact rates are calculuted as follows:
\begin{itemize}
\item $e_k \approx K h^p$ with $h = \frac{b-a}{k} = \frac{d-c}{k}$
\item Theoretical for Quasi-interpolant with $\m=\n=[2,4]$ on each rectangle:
$p = 2 + \min\{ -1+4, 2+2, 4-1\} = 5$.
\item Theoretical for Quasi-interpolant with $\m=\n=[4]$ on each rectangle:
$p = 2 + \min\{ -1+4, 4-1\} = 5$.
\end{itemize}

{\scriptsize
\begin{table}
\begin{center}
\begin{tabular}{|c||c|c|c|c|}
\hline
Method & \multicolumn{3}{c}{Error for piecewise approximation $1\times 1$ to $16 \times 16$} & \\
\hline
Quasi-Interpolant & 2.3370 & 0.5431 & 0.0453 & 0.0105 \\
m=n=[2,4] & 0.0027 & 0.0010 & 4.9070e-04 & 2.4617e-04 \\
$h^{4.6008}$ & 1.3123e-04 &  7.3995e-05 & 4.4315e-05 & 2.7561e-05 \\
& 1.9278e-05 & 1.3300e-05 & 9.4613e-06 & 6.7413e-06 \\
\hline
Quasi-Interpolant  &0.9977 &0.1564 &0.0248 &0.0030  \\
m=n=[4]  &0.0022 &8.9636e-04 &4.1602e-04 &1.8871e-04  \\
$h^{4.2938}$ &1.1996e-04 &7.1195e-05 &4.4315e-05 &2.7561e-05 \\
&1.9278e-05 &1.3300e-05 &9.4613e-06 &6.7413e-06 \\
\hline
\end{tabular}
\end{center}
\label{6b}
\end{table}
}

\section{Closing Remarks}

In this paper we have constructed a new quasi-interpolant for discrete blended
surface approximation based on dual basis functions in the Bernstein basis,
and we have established error estimates comparable
to approximation on full tensor product grids,
much like the Serendipity elements in the finite element literature.

Throughout this paper we assumed 
$m_{k} | m_{k+1}$ and $n_{k} | n_{k+1}$.
If we do not have this, we can still apply our construction by first using degree
elevation.
For example, if $\m = [2,3,5,7]$
we would degree elevation to get $\tilde\m = [2,4,8,16]$,
and then proceed as before.

The framework for this originates from the talk
``Dual bases on subspaces and the approximation from sums of polynomial and spline spaces'',
given by the author at the conference ``Mathematical Methods for Curves and Surfaces'', 
held in Oslo Norway, June, 2012.
The talk included constructions for Hermite and spline blended elements
similar to the construction in this paper.
Our plan is to publish the details of these results in forthcoming papers.

\bibliographystyle{amsplain}

\begin{thebibliography}{99}


\bibitem{B03}
O. Biermann, \:Uber n\:aherungsweise kubaturen,
Monatsh. Math. Physik (14), 211--225 (1903).


\bibitem{DP82}
F.J. Delvos and H. Posdorf,
Generalized Biermann interpolation,
Resultate der Mathematik (5) 1982, 6--18.

\bibitem{D82}
F.-J. Delvos,
d-Variate Boolean interpolation,
JAT (34) 1982, 99--114.

\bibitem{DS89}
F.J. Delvos and W. Schempp,
Boolean methods in interpolation and approximation,
Pitman Research Notes in Mathematics Series \#230,
Longman Scientific \& Technical (Harlow, Essex, UK), 1989.

\bibitem{DF14}
N. Dyn and M. Floater,
Multivariate polynomial interpolation on lower sets,
JAT (117) 2014 34--42.


\bibitem{K13}
S. Kersey, Dual basis functions in subspaces of inner product spaces,
AMC 219 (2013) 10012--10024.

\bibitem{K14}
S. Kersey, Dual basis functions in subspaces,
arXiv:1406.6632 (2014).




\bibitem{SL2007}
M. Lai and L. Schumaker, Spline functions on triangulations,
Encylopedia of Mathematics and its Applications,
Volume 110, Cambridge University Press (Cambridge, UK), (2007).


\end{thebibliography}

\newpage 

\renewcommand\thefigure{\arabic{figure}}
\renewcommand\thetable{\arabic{table}}

\end{document}